\def\R{\mathbb{R}}
\def\C{\mathbb{C}}
\newcommand{\expn}{\operatorname{e}}
\newcommand{\diag}{\operatorname{diag}}
\newcommand{\beq}{\begin{equation}}
\newcommand{\eeq}{\end{equation}}
\newcommand {\mat}      [1] {\left[\begin{array}{#1}}
\newcommand {\rix}          {\end{array}\right]}
\newcommand {\smat}      [1] {\left[\begin{smallmatrix}{#1}}
\newcommand {\srix}          {\end{smallmatrix}\right]}
\newcommand {\s}      [1] {\begin{smallmatrix}{#1}}
\newcommand {\se}          {\end{smallmatrix}}
\newcommand{\trace}{\operatorname{tr}}
\newtheorem{defn}{Definition}[section]
\newtheorem{remark}{Remark}
\newtheorem{lem}[defn]{Lemma}
\newtheorem{prop}[defn]{Proposition} %Proposition entspricht Satz
\newtheorem{thm}[defn]{Theorem}
\def\addlegendimage{\csname pgfplots@addlegendimage\endcsname}
\newlength\fheight
\newlength\fwidth
  \newcommand{\matlab}{MATLAB\textsuperscript{\textregistered}}
  \newcommand{\intel}{Intel\textsuperscript{\textregistered}}
\definecolor{mycolor1}{rgb}{0.00000,0.44700,0.74100}%
\definecolor{mycolor2}{rgb}{0.85000,0.32500,0.09800}%
\definecolor{mycolor3}{rgb}{0.92900,0.69400,0.12500}%
\definecolor{mycolor4}{rgb}{0.49400,0.18400,0.55600}%
\definecolor{mycolor5}{rgb}{0.46600,0.67400,0.18800}%
\title{Model order reduction for bilinear systems with non-zero initial states -- different approaches with error bounds}
\author{Martin Redmann\thanks{Martin Luther University Halle-Wittenberg, Institute of Mathematics, Theodor-Lieser-Str. 5, 06120 Halle (Saale), Germany, Email: {\tt 
martin.redmann@mathematik.uni-halle.de}.}\and Igor Pontes Duff \thanks{Max Planck Institute for Dynamics of Complex Technical Systems, Magdeburg, Germany, Email: {\tt 
pontes@mpi-magdeburg.mpg.de}.}
}
\begin{document}

\maketitle

\begin{abstract}
In this paper, we consider model order reduction for bilinear systems with non-zero initial conditions. We discuss choices of Gramians for both the homogeneous and the inhomogeneous parts of the system individually and prove how these Gramians characterize the respective dominant subspaces of each of the two subsystems. Proposing different, not necessarily structure preserving, reduced order methods for each subsystem, we establish several strategies to reduce the dimension of the full system. For all these approaches, error bounds are shown depending on the truncated Hankel singular values of the subsystems. Besides the error analysis, stability is discussed. In particular, a focus is on a new criterion for the homogeneous subsystem guaranteeing the existence of the associated Gramians and an asymptotically stable realization of the system.
\end{abstract}
\textbf{Keywords:} Model order reduction, bilinear systems, error bounds, stability analysis

\noindent\textbf{MSC classification:}  	65L05, 93A15, 93C10, 93D20

%%%%%%%%%%%%%%%%%%%%%%

\section{Introduction}

In this paper, we study model order reduction (MOR) techniques for the following system with non-zero initial states:
\begin{subequations}\label{fullsys}
\begin{align}\label{biode}
\dot x(t) &= Ax(t) +  Bu(t) + \sum_{k=1}^m N_k x(t) u_k(t),\quad x(0) = x_0 = X_0 v_0,\\ \label{biout}
y(t) &= Cx(t),\quad t\geq 0,
  \end{align}
  \end{subequations}
  where $A, N_k \in \R^{n\times n}$, $B \in \R^{n\times m}$, and $C \in \R^{p\times n}$. Moreover, $x$ is the state vector, $y$ the quantity of interest and the columns of $X_0\in \R^{n\times q}$ span all initial states $x_0$ that are considered here, i.e., there exist $v_0\in\R^q$ such that $x_0 = X_0 v_0$. We assume that the matrix $A$ is Hurwitz, meaning 
that $\sigma(A) \subset \C_{-}=\{z\in\mathbb C:\quad \Re(z)<0\}$, where $\sigma(\cdot)$ denotes the spectrum of a matrix and $\Re(\cdot)$ represents the real part of a complex number. Furthermore, let $u = (\begin{matrix} u_1, u_2, \dots, u_m\end{matrix})^\top\in L^2$, i.e., \begin{align*}
                   \left\|u\right\|_{L^2}^2:= \int_0^\infty \left\|u(s)\right\|^2_2 ds = \int_0^\infty u^\top(s) u(s) ds <\infty.
                  \end{align*}
There exist many different MOR techniques for bilinear systems when $x_0=0$, e.g., methods that are balancing related \citep{typeIBT, hartmann, redmannspa2, redstochbil},  optimization/interpolation based \citep{breiten_benner, flagggug} and data-driven \citep{loewener_bil}. However, many applications involve non-zero initial states such that a study for MOR for \eqref{fullsys} is essential. Several approaches in this context have been established for linear systems \citep{BauerBenner_inhom, inhom_lin, spa_inhom, HeinReisAn, Matze_inhom}. There is no straightforward generalization of these techniques to \eqref{fullsys}, since the study of transfer functions and fundamental solutions is much more involved for bilinear systems. In this work, we choose an approach that relies on estimates for fundamental solutions of bilinear systems that originate in \citep{h2_bil}. These estimates enable a detailed theoretical analysis for an ansatz that conceptionally extends the one used in \citep{inhom_lin}.                                                                                                                                                                                                                                                                                                                                                                                                                                                                                                                                                                                                                                                                                                                                                                                                                                                                                                                                                                                                                                                                                                                                                                                                                                                                                                                                                                                                                                                                                                                                                                                                                                                                                                                                                                                                                                                                                                                                                                                                                                                                                                                                                                                                                                                                                                                                                                                                                                                                                                                                                                                                                                                                                                                                                                                                                                                                                                                                                                      
The general idea is to split \eqref{fullsys} into two subsystems. System 
\begin{subequations}\label{bilinear_x0}
\begin{align}\label{bilinear_x0_state}
 \dot x_{x_0}(t) &= Ax_{x_0}(t) +  \sum_{k=1}^m N_k x_{x_0}(t) u_k(t),\quad x_{x_0}(0) = X_0 v_0,\\ \label{bilinear_x0_out} y_{x_0}(t) &= Cx_{x_0}(t).
\end{align}
\end{subequations}
involves the initial condition and
\begin{subequations}\label{bilinear_B}
\begin{align}\label{bilinear_B_state}
  \dot x_B(t) &=  Ax_B(t) +  Bu(t) + \sum_{k=1}^m N_k x_B(t) u_k(t),\quad x_B(0) = 0,\\ \label{bilinear_B_out}
  y_B(t) &= C x_B(t)
\end{align}
\end{subequations}
captures the inhomogeneous part of \eqref{fullsys}. Consequently, we have $x=x_{x_0} + x_B$ and $y = y_{x_0} +  y_B$. The above splitting was considered in \citep{huang2021splitting}, where the authors discuss a balancing approach to produce reduced order models (ROMs). Additionally,
MOR of \eqref{fullsys} was proposed in \citep{inhom_bil} based on a different splitting, that was originally described in \citep{d1974realization}. However, theoretical questions remain open for these approaches such as the error analysis. Our main goal in this work is to propose new MOR schemes for the bilinear systems \eqref{fullsys}  with general initial conditions possessing computable error bounds based on the neglected singular values of the system. To this aim, we propose and investigate balanced truncation (BT) and the singular perturbation approximation (SPA) method for subsystem \eqref{bilinear_x0}. As a consequence, we are able to show $L^2$-error bounds for those MOR schemes. For reasons of consistency, we use the MOR results available in the literature (in particular from \citep{redmannstochbilspa, redstochbil}) to construct reduced models for subsystem \eqref{bilinear_B} with $L^2$-error bounds.  \smallskip

Notice that the need for MOR of bilinear systems with non-zero initial states is higher than for linear systems since there is an essential difference between both cases. For linear systems, it is required that several initial states are of interest in order to motivate applying MOR to the homogeneous equation. However, the homogeneous bilinear system \eqref{bilinear_x0} is control dependent such that MOR can already pay off for a single initial condition ($X_0=x_0$ and $v_0=1$) if system evaluations for multiple controls are desired. The individual reduction of \eqref{bilinear_x0} and \eqref{bilinear_B} has several advantages. As for linear systems, one subsystem can have a higher reduction potential than the other. Hence, reduced order dimensions can be chosen differently, but the actual benefit of the splitting goes beyond this degree of freedom. In addition, it turns out that using different Gramians and different structures of the reduced systems can be beneficial. 
In particular, this splitting method combined with the proposed MOR schemes is shown to preserve stability and to possesses computable error bounds depending on truncated singular values, with the price  of having to reduce two different subsystems instead of potentially just one. However,  is worth noticing that up to our knowledge there is no other system theoretical MOR method for  bilinear systems with non-zero initial conditions that does not involve any type of splitting. \smallskip

In this work, we discuss several Gramian based approaches in which subsystems \eqref{bilinear_x0} and \eqref{bilinear_B} are reduced separately. This leads to reduced order models
\begin{align}\label{rom1}
\dot {\tilde x}_{x_0}(t) = \tilde A_{x_0} {\tilde x}_{x_0}(t) +  \sum_{k=1}^m \tilde N_{x_0, k} \tilde x_{x_0}(t) u_k(t),\quad \tilde x_{x_0}(0) = \tilde X_0 v_0,\quad \tilde y_{x_0}(t) = \tilde C_{x_0} \tilde x_{x_0}(t),\end{align}
($\tilde A_{x_0}, \tilde N_{x_0, k} \in \mathbb R^{r_{x_0}\times r_{x_0}}$, $\tilde X_0 \in \R^{r_{x_0}\times q}$ and $\tilde C_{x_0} \in \R^{p\times r_{x_0}}$) approximating \eqref{bilinear_x0} and to reduced systems
\begin{align}\label{rom2}
\begin{aligned}
             \dot {\tilde x}_B(t) &= \tilde A_B {\tilde x}_B(t)+\tilde B u(t)+\sum_{k=1}^m \left(\tilde N_{B, k} {\tilde x}_B(t) + \tilde E_{k} u(t)\right)u_k(t),\quad\tilde x_B(0) = 0, \\
    \tilde y_B(t) &=\tilde C_B {\tilde x}_B(t)+ \tilde D u(t),
            \end{aligned}
            \end{align}
($\tilde A_{B}, \tilde N_{B, k} \in \mathbb R^{r_{B}\times r_{B}}$, $\tilde B \in \R^{r_{B}\times m}$ and $\tilde C_{B} \in \R^{p\times r_{B}}$, $\tilde D\in\R^{p\times m}$ and $\tilde E_{k}\in\R^{r_{B}\times m}$) approximating \eqref{bilinear_B} 
with $\tilde x_{x_0}(t)\in \R^{r_{x_0}}$ and $\tilde x_{B}(t)\in \R^{r_{B}}$, where $r_{x_0}, r_B\ll n$ and all above matrices are of suitable dimension. The goal is to choose \eqref{rom1} and \eqref{rom2} such that $y\approx \tilde y_{x_0} + \tilde y_B$. Notice that we will introduce both a structure preserving variant ($\tilde D, \tilde E_{k} =0$) of \eqref{rom2} and a scheme with $\tilde D, \tilde E_{k} \neq 0$ (see Section \ref{MOR_B}). For the second method the additional quadratic control term in \eqref{rom2} is vital. Else, the approximation would be worse and the associated error bound in Theorem 
\ref{special_error_bound2} could not be established.
\smallskip

In this paper, we provide estimates that explain how the considered Gramians characterize dominant subspaces in both \eqref{bilinear_x0} and \eqref{bilinear_B}. Such a result for \eqref{bilinear_x0} has not even been established in the linear case. These estimates give a motivation for different Gramian based MOR techniques proposed in this paper 
without directly using control concepts such as reachability or observability. Moreover, we prove error bounds for all methods studied within this paper, closing a gap in the analysis of such schemes. However, the main focus is on analyzing \eqref{rom1}, since different results on properties of \eqref{rom2} already exist in the literature. 

The paper is organized as follows. In Section  \ref{section2}, we recall some basic results on bilinear systems. Therein, the state evolution is characterized using the fundamental solution of a bilinear system. Additionally, we state some intermediate results that will later be required to prove MOR error bounds. In Section \ref{section3}, we proposed Gramians for the subsystems \eqref{bilinear_x0} and \eqref{bilinear_B}.  Additionally, we show the eigenspaces of those Gramians are associated with dominant subspaces of the subsystem dynamics. Based on these Gramians, in Section \ref{sec:balancing}, we propose two different model reduction schemes, namely BT and SPA, for both  \eqref{bilinear_x0} and \eqref{bilinear_B}. Those methodologies were tailored such that the reduced models obtained possess an error bound with respect to the $L^2$-norm. Indeed, in Subsection \ref{MOR_x0}, we demonstrate that the MOR schemes for \eqref{bilinear_x0}  possesses an $L^2$-error bound depending on the neglected singular values. In Subsection \ref{MOR_B}, we recall variants of the BT and SPA from the literature leading to $L^2$-error bounds for \eqref{bilinear_B}. It is important to emphasize that the choice of Gramians and MOR schemes were made such that the overall procedure is stability preserving, and an $L^2$-error bound depending on the truncated singular values can be established. Finally, in Section \ref{sec:NumRes}, numerical experiments are conducted in order to illustrate the efficiency of the algorithms.     

\section{Solution representation and fundamental solutions}\label{section2}

We begin with such essential concepts and estimates in this section that are required to investigate the error and the stability for the particular MOR schemes proposed later.\smallskip

The fundamental solution $\Phi$ to \eqref{biode} will play a very important role in the analysis of the MOR techniques investigated in this paper. $\Phi$ represents a basis for the solution to the homogeneous state equation ($B=0$). Its precise definition is as follows:
\begin{defn}\label{defn_fund}
Given that $s\leq t$, the fundamental solution to \eqref{biode} is a matrix-valued function $\Phi$ satisfying
 \begin{align*}
 \Phi(t,s) = I +\int_s^t A \Phi(v,s) dv + \sum_{k=1}^m \int_s^t N_k \Phi(v,s) u_k(v) dv,
\end{align*}
where $I$ is the identity matrix. If $s=0$, we set $\Phi(t):=\Phi(t, 0)$.
\end{defn}
This fundamental solution can now be used to derive an explicit representation for the state variable. It is beneficial since it generally holds in contrast to Volterra series expansions of the state that require more restrictive assumptions in order to ensure convergence. Moreover, working with the fundamental solution is vital in the context of MOR since our  error estimates rely on the representation given in the following lemma.
\begin{lem}\label{lem_sol_rep}
The solution to \eqref{biode} for $0\leq t_0\leq t$ is given by
  \begin{align*}
 x(t) = \Phi(t, t_0) x(t_0) + \int_{t_0}^t \Phi(t,s) B u(s) ds.
\end{align*}
\end{lem}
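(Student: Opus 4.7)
The plan is to verify directly that the claimed expression satisfies the initial value problem \eqref{biode} at time $t_0$ and then invoke uniqueness of solutions to bilinear ODEs. Let $z(t) := \Phi(t,t_0)x(t_0) + \int_{t_0}^{t}\Phi(t,s)Bu(s)\,ds$; the aim is to show $z(t) = x(t)$ for $t \ge t_0$.

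First I would check the initial condition. Setting $s=t=t_0$ in Definition \ref{defn_fund} yields $\Phi(t_0,t_0)=I$ (the two integrals vanish), and the remaining integral in $z$ collapses, so $z(t_0)=x(t_0)$. Next I would differentiate $\Phi(\cdot,s)$: since Definition \ref{defn_fund} presents $\Phi(t,s)$ as an absolutely continuous function of $t$ (with $u\in L^2$, each $u_k$ is locally integrable), the fundamental theorem of calculus gives
\begin{equation*}
\partial_t \Phi(t,s) \;=\; A\Phi(t,s) \;+\; \sum_{k=1}^{m} N_k \Phi(t,s)\, u_k(t)
\end{equation*}
for a.e.\ $t\ge s$.

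Then I would apply the Leibniz rule to the integral term in $z$, using $\Phi(t,t)=I$:
\begin{equation*}
\frac{d}{dt}\int_{t_0}^{t}\Phi(t,s)Bu(s)\,ds \;=\; Bu(t) \;+\; \int_{t_0}^{t}\Bigl(A\Phi(t,s) + \sum_{k=1}^{m}N_k\Phi(t,s)u_k(t)\Bigr) Bu(s)\,ds.
\end{equation*}
Adding the derivative of the homogeneous part $\Phi(t,t_0)x(t_0)$, which by the formula above equals $A\Phi(t,t_0)x(t_0)+\sum_k N_k \Phi(t,t_0)x(t_0) u_k(t)$, and factoring out $A$ and each $N_k u_k(t)$ across both contributions, I obtain
\begin{equation*}
\dot z(t) \;=\; A\, z(t) \;+\; Bu(t) \;+\; \sum_{k=1}^{m} N_k\, z(t)\, u_k(t).
\end{equation*}

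Thus $z$ solves the same initial value problem as $x$, and standard uniqueness for bilinear ODEs with $L^2$ controls (Carathéodory-type conditions, since the right-hand side is affine in $x$ with locally integrable coefficients) forces $z=x$. The main technical nuance is justifying the almost-everywhere differentiation under the integral sign when the inputs $u_k$ are only $L^2$; this is handled by the absolute continuity of $\Phi(\cdot,s)$ built into Definition \ref{defn_fund} and an application of dominated convergence to move $\partial_t$ inside the integral over $s$. Everything else is a bookkeeping computation.
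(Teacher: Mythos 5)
Your argument is correct: you verify that the candidate expression satisfies the initial value problem at $t_0$ and conclude by uniqueness for Carath\'eodory-type ODEs. The paper reaches the same formula by a slightly different organization of the computation: it first invokes the cocycle/invertibility property $\Phi(t,s)=\Phi(t)\Phi^{-1}(s)$ and writes the candidate as $\Phi(t)g(t)$ with $g(t)=\Phi^{-1}(t_0)x(t_0)+\int_{t_0}^t\Phi^{-1}(s)Bu(s)\,ds$, so that the product rule plus the fundamental theorem of calculus applied to $g$ do all the work. The practical difference is where the technical burden sits. In the paper's version only the single function $t\mapsto\Phi(t)$ is differentiated and the integral over $s$ involves no $t$-dependence in the integrand, so no differentiation under the integral sign is needed; the price is that one must know $\Phi(t)$ is invertible and that the two-parameter fundamental solution factors as stated (a consequence of uniqueness for the matrix equation). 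Your version avoids any appeal to invertibility or to the factorization, working directly with $\Phi(t,s)$, but then the a.e.\ identity $\partial_t\Phi(t,s)=A\Phi(t,s)+\sum_k N_k\Phi(t,s)u_k(t)$ must be pushed through the integral $\int_{t_0}^t(\cdot)\,Bu(s)\,ds$, which you correctly flag as the main nuance; a clean way to discharge it without a domination argument is to verify the integral (rather than differential) form of the equation for $z$ and swap the order of integration by Fubini. Both routes are legitimate and of comparable length; yours is marginally more self-contained, the paper's marginally slicker.
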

\begin{proof}
Using that $\Phi(t,s) = \Phi(t) \Phi^{-1}(s)$, the result follows by applying the product rule to $\Phi(t) g(t)$, where $g(t) := \Phi^{-1}(t_0)x(t_0) + \int_{t_0}^t \Phi^{-1}(s) B u(s) ds$. 
\end{proof}
 Unfortunately, the fundamental solution of a bilinear system is control dependent and hence we have $\Phi(t, s) \neq \Phi(t-s)$ (no semigroup property of $\Phi(\cdot)$) making it infeasible to directly apply Lemma \ref{lem_sol_rep} in order to obtain error bounds. Therefore, an estimate on $\Phi$ is needed in order to extract the dependence on $u$. This result is formulated in the lemma below. It is, e.g., the essential ingredient to derive the stability criterion in Theorem \ref{cond_stab} and to prove the preliminary error bound of Lemma \ref{basis_bound}. In the following, we write $M_1\leq M_2$ if $M_2-M_1$ is symmetric positive semidefinite given that $M_1$ and $M_2$ are symmetric matrices.
\begin{lem}\label{fund_est}
Let $\Phi$ be the fundamental solution according to Definition \ref{defn_fund}, $K\geq 0$ and $\gamma>0$. Then,
 \begin{align*}
\Phi(t, s) K \Phi^\top(t, s) \leq  \exp\left\{\int_s^t \left\|\gamma u^{0}(v)\right\|_2^2 dv\right\}  Z_\gamma(t-s),
\end{align*}
where $ Z_\gamma(t)$, $t\geq 0$, satisfies the matrix differential equation
\begin{align}\label{eqZ}
 \dot { Z}_\gamma(t) = A  Z_\gamma(t) +  Z_\gamma(t) A^\top +\frac{1}{\gamma^2}\sum_{k=1}^m N_k Z_\gamma(t) N_k^\top ,\quad Z_\gamma(0) = K,
\end{align}
and $u^0$ is the vector of control functions entering the bilinear part\begin{align}\label{equ0}
u^{0}=(u_1^{0}\ u_2^{0}\, \ldots\, u_m^{0})^\top\quad \text{with}\quad u_k^{0} \equiv \begin{cases}
  0,  & \text{if }N_k = 0\\
  u_k, & \text{else}.
\end{cases}
\end{align}
\end{lem}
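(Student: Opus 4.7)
My plan is to set $P(t,s) := \Phi(t,s)\,K\,\Phi^\top(t,s)$ and derive a matrix-valued differential inequality for $P$ of the same shape as the Lyapunov-type ODE \eqref{eqZ} that defines $Z_\gamma$. First, differentiating $P$ with respect to $t$ using Definition \ref{defn_fund} gives
\begin{equation*}
\dot P(t,s) \;=\; A\,P(t,s) + P(t,s)\,A^\top + \sum_{k=1}^m \bigl(N_k P(t,s) + P(t,s) N_k^\top\bigr) u_k(t).
\end{equation*}
Because $N_k P + P N_k^\top = 0$ whenever $N_k=0$, I can replace $u_k$ by $u_k^{0}$ from \eqref{equ0} in the sum without changing the right-hand side.

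The crucial estimate is a matrix Young inequality in the Loewner order: for any $\gamma>0$ and any scalar $\alpha$ one has
\begin{equation*}
\alpha(N_k P + P N_k^\top) \;=\; \tfrac{1}{\gamma}N_k P^{1/2}\cdot(\gamma\alpha)P^{1/2} + (\gamma\alpha)P^{1/2}\cdot\tfrac{1}{\gamma}(N_k P^{1/2})^\top \;\leq\; \tfrac{1}{\gamma^2}N_k P N_k^\top + \gamma^2\alpha^2 P,
\end{equation*}
which is the PSD version of the identity $XY^\top+YX^\top\leq XX^\top+YY^\top$. Applying this with $\alpha=u_k^{0}(t)$ and summing over $k$ yields the differential inequality
\begin{equation*}
\dot P(t,s) \;\leq\; A\,P(t,s) + P(t,s)\,A^\top + \tfrac{1}{\gamma^2}\sum_{k=1}^m N_k P(t,s) N_k^\top + \gamma^2\|u^{0}(t)\|_2^2\,P(t,s),
\end{equation*}
with $P(s,s)=K$.

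Next I would define the comparison object $Q(t,s) := \exp\!\bigl\{\int_s^t\gamma^2\|u^0(v)\|_2^2\,dv\bigr\}\,Z_\gamma(t-s)$, which satisfies the same initial value $Q(s,s)=K$ and, by \eqref{eqZ} and the product rule, solves the matrix ODE corresponding to the inequality above with equality. Setting $R(t,s):=Q(t,s)-P(t,s)$, I obtain $R(s,s)=0$ together with
\begin{equation*}
\dot R(t,s) \;\geq\; A\,R(t,s) + R(t,s)A^\top + \tfrac{1}{\gamma^2}\sum_{k=1}^m N_k R(t,s) N_k^\top + \gamma^2\|u^{0}(t)\|_2^2\,R(t,s).
\end{equation*}
The conclusion $R(t,s)\geq 0$, i.e. the claim of the lemma, then follows from the positivity of the evolution generated by the right-hand side.

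I expect this last step to be the one requiring the most care. The argument is that the linear operator $\mathcal L_t(X) = AX + XA^\top + \gamma^{-2}\sum_k N_k X N_k^\top + \gamma^2\|u^0(t)\|_2^2 X$ generates a flow that leaves the PSD cone invariant, because $\mathcal L_t$ is the sum of a Lyapunov operator (whose flow $X\mapsto e^{tA}Xe^{tA^\top}$ is positive), a completely positive map $X\mapsto\sum_k N_kXN_k^\top$, and a nonnegative multiple of the identity. The standard way to make this rigorous is the boundary argument: if $R$ first touches the boundary of the PSD cone at time $t_1$ along a vector $v$ with $v^\top R(t_1,s)v=0$, then $R(t_1,s)v=0$, whence $v^\top\dot R(t_1,s)v \geq \gamma^{-2}\sum_k v^\top N_k R(t_1,s)N_k^\top v \geq 0$, preventing $R$ from leaving the cone. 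Once this comparison principle is in place, $R\geq 0$ on $[s,\infty)$ is immediate and the lemma follows.
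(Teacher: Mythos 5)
Your proof is correct, and it takes a genuinely more self-contained route than the paper. The paper's own argument factorizes $K=FF^\top$, writes $\Phi(t,s)F$ column-by-column as solutions $x_{f_i}(\cdot,s)$ of the homogeneous state equation, and then simply cites the rank-one bounds from Lemmas 2.2--2.4 of the reference \citep{h2_bil}, summing them up via linearity of $K\mapsto Z_\gamma(\cdot,K)$. You instead work directly with the full matrix $P(t,s)=\Phi(t,s)K\Phi^\top(t,s)$: the differentiation of $P$, the Loewner--Young step $u_k^0(N_kP+PN_k^\top)\leq \gamma^{-2}N_kPN_k^\top+\gamma^2(u_k^0)^2P$ (which is in fact the same engine that drives the cited lemmas), and the comparison with $Q(t,s)=\exp\{\int_s^t\|\gamma u^0\|_2^2\,dv\}Z_\gamma(t-s)$ are all correct, including the observation that replacing $u_k$ by $u_k^0$ costs nothing. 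What your version buys is a complete proof inside the paper with no external dependency and no rank-one reduction; what the paper's version buys is brevity. The only step you should tighten is the final cone-invariance argument: the ``first touching time'' reasoning with $v^\top\dot R(t_1)v\geq 0$ is not by itself conclusive (a nonnegative derivative at a single boundary contact does not forbid exit), and the standard fix is either the $\varepsilon$-perturbation $R_\varepsilon=R+\varepsilon\operatorname{e}^{ct}I$ or, more cleanly, the route you already name: since $\mathcal L_t$ is a Lyapunov operator plus a completely positive map plus a nonnegative scalar multiple of the identity, it generates a positive evolution $T(t,\tau)$ on symmetric matrices (see \citep{damm}), and Duhamel's formula $R(t)=\int_s^tT(t,\tau)D(\tau)\,d\tau$ with the PSD slack $D(\tau)\geq 0$ gives $R\geq 0$ rigorously. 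With that step finalized, your argument is a complete and valid alternative proof.
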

\begin{proof}
The proof is stated in Appendix \ref{appendix_section2}.
\end{proof}
Lemma \ref{fund_est} is a variation of the results from Lemmas 2.2, 2.3 and 2.4 in \citep{h2_bil}. The constant $\gamma$ in Lemma \ref{fund_est} is essential to achieve asymptotic stability of \eqref{eqZ}. Based on this stability, Gramians for \eqref{bilinear_x0} will be introduced in Section \ref{sec:Gram_x0}. However, a bit less than asymptotic stability is needed, as the following theorem shows. It contains a sufficient condition for the existence of Gramians that we need to establish error bounds. This criterion is related to a matrix inequality and can be seen as an extended notion of stability for \eqref{eqZ}. We will also see later that ROMs \eqref{rom1} based on balancing generally satisfy such a condition.
\begin{thm}\label{cond_stab}
Let $\gamma>0$ and $Z_\gamma(\cdot, X_0 X_0^\top)$ the solution to \eqref{eqZ} with $K= X_0 X_0^\top$. If there exists a matrix $X>0$ such that \begin{align}\label{extended_stab}
     A  X+ X  A^\top + \frac{1}{\gamma^2}\sum_{k=1}^m N_{k} X N_{k}^\top \leq -X_0 X_0^\top.                                     
                                          \end{align}
  Then, \eqref{eqZ} is stable meaning that \begin{align}\label{bounded_Z} \sigma\left(I\otimes A + A\otimes I+ \frac{1}{\gamma^2}\sum_{k=1}^{m} N_k \otimes N_k\right)\subset \overline{\mathbb C_-}
                \end{align}
with $\overline{\mathbb C_-}$ denoting the closure of ${\mathbb C_-}$. Moreover, there is a constant $c>0$ such that $\left\|Z_\gamma(t, X_0 X_0^\top)\right\|_2 \lesssim \expn^{-c t}$\;\footnote{The relation $\lesssim$ tells that the left side can be bounded by the right side up to an unspecified constant.}, i.e., the initial condition $K= X_0 X_0^\top$ yields exponential decay. In particular, we can construct a matrix $V \in \R^{n \times \tilde n}$, $\tilde n\leq n$, with $V^\top V = I$ providing a projected system with coefficients $\tilde A = V^\top A V$,   $\tilde X_0 = V^\top X_0$ and $\tilde N_{k} =  V^\top{N}_{k}V$. This reduced  system with fundamental solution $\tilde \Phi$ has an asymptotically stable equation \eqref{eqZ}, i.e., it holds that
\begin{align}\label{stab_min_rel} \sigma\left(I\otimes \tilde A + \tilde A\otimes I+ \frac{1}{\gamma^2}\sum_{k=1}^{m} \tilde N_k \otimes \tilde N_k\right)\subset {\mathbb C_-}
                \end{align}
and it has no reduction error in the sense that
\[ \Phi(t)X_0 = V \tilde{\Phi}(t)\tilde X_0.\]
\end{thm}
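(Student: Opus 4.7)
The plan is to treat the four claims in order, exploiting that the Lyapunov-type map $\mathcal L(Y):=AY+YA^\top+\gamma^{-2}\sum_k N_kYN_k^\top$ generates a \emph{positive} semigroup on the symmetric matrices: each factor $e^{sA}(\cdot)e^{sA^\top}$ and $N_k(\cdot)N_k^\top$ in its Dyson expansion preserves the PSD cone, so $e^{t\mathcal L}$ does as well.

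\emph{Step 1 (Closed half-plane bound and integrated equation).} For \eqref{bounded_Z} I would invoke Perron--Frobenius for the positive semigroup $e^{t\mathcal L}$: it supplies a nonzero $Y_\ast\ge 0$ with $\mathcal L^\ast(Y_\ast)=s(\mathcal L)\,Y_\ast$, where $s(\mathcal L):=\max\{\Re\mu:\mu\in\sigma(\mathcal L)\}$. Testing the hypothesis,
\[ s(\mathcal L)\operatorname{tr}(XY_\ast)=\operatorname{tr}(\mathcal L(X)Y_\ast)\le -\operatorname{tr}(X_0X_0^\top Y_\ast)\le 0, \]
and $X>0$ with $Y_\ast\ge 0$, $Y_\ast\neq 0$ forces $s(\mathcal L)\le 0$. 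Next I monitor $\Xi(t):=e^{t\mathcal L}(X)+\int_0^t e^{s\mathcal L}(X_0X_0^\top)\,ds$: the hypothesis and positivity yield $\dot\Xi(t)=e^{t\mathcal L}(\mathcal L(X)+X_0X_0^\top)\le 0$, so $\Xi(t)\le X$. This simultaneously gives $e^{t\mathcal L}(X)\le X$ and $\int_0^\infty W(s)\,ds\le X$ for $W(t):=Z_\gamma(t,X_0X_0^\top)$. Since $X_0X_0^\top\le cX$, positivity propagates to $W(t)\le cX$, hence $\dot W=\mathcal L(W)$ is uniformly bounded; a Barbalat-type argument delivers $W(t)\to 0$, and integrating the ODE produces $\mathcal L(P)=-X_0X_0^\top$ with $P:=\int_0^\infty W(s)\,ds$ and $0\le P\le X$.

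\emph{Step 2 (Construction of $V$ and zero reduction error).} A kernel analysis of $\mathcal L(P)=-X_0X_0^\top$ characterises $\operatorname{im}(P)$ as the minimal $A$-, $N_k$-invariant subspace containing $\operatorname{im}(X_0)$: for $v\in\ker P$, evaluating the equation in $v^\top(\cdot)v$ gives $v^\top X_0X_0^\top v=0$ and $\sum_k v^\top N_kPN_k^\top v=0$, so $X_0^\top v=0$ and $N_k^\top v\in\ker P$; the off-diagonal probe $v^\top(\cdot)w$ annihilates the $N_k$-term and forces $A^\top v\in\ker P$. Taking $V\in\R^{n\times\tilde n}$ to be an orthonormal basis of $\operatorname{im}(P)$ yields $AV=V\tilde A$, $N_kV=V\tilde N_k$, $X_0=V\tilde X_0$, and substituting $V\tilde\Phi(t)\tilde X_0$ into Definition~\ref{defn_fund} for the full system verifies the defining integral equation, so uniqueness gives $\Phi(t)X_0=V\tilde\Phi(t)\tilde X_0$.

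\emph{Step 3 (Asymptotic stability of the reduced operator and exponential decay).} Writing $P=V\tilde PV^\top$ with $\tilde P:=V^\top PV>0$, the invariance of $\operatorname{im}(V)$ yields the intertwining $\mathcal L(VMV^\top)=V\tilde{\mathcal L}(M)V^\top$, so $\tilde{\mathcal L}(\tilde P)=-\tilde X_0\tilde X_0^\top$; Step~1 applied in $\R^{\tilde n}$ gives $\sigma(\tilde{\mathcal A})\subset\overline{\C_-}$. To upgrade to strict negativity I would suppose $s(\tilde{\mathcal L})=0$, extract via Perron--Frobenius a nonzero $\tilde M\ge 0$ with $\tilde{\mathcal L}^\ast(\tilde M)=0$, and rerun the kernel analysis on $\tilde M$: $\ker\tilde M$ is $\tilde A$-, $\tilde N_k$-invariant and contains $\operatorname{im}(\tilde X_0)$. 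But by construction the reduced reachability from $\tilde X_0$ spans $\R^{\tilde n}$ (applying $V$ sends any reduced reachable combination to the corresponding full one in $\operatorname{im}(V)$), whence $\ker\tilde M=\R^{\tilde n}$, a contradiction that yields \eqref{stab_min_rel}. Finally, the intertwining delivers $W(t)=V\tilde Z_\gamma(t,\tilde X_0\tilde X_0^\top)V^\top$, and asymptotic stability of $\tilde{\mathcal A}$ provides $\|W(t)\|_2\lesssim e^{-ct}$. \textbf{Main obstacle:} the upgrade from $s(\tilde{\mathcal L})\le 0$ to $s(\tilde{\mathcal L})<0$, which is precisely where the minimality of $V$ is essential; the same strengthening is unavailable for the unreduced operator $\mathcal A$, and it is this obstacle that motivates the whole projection construction.
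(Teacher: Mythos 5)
Your proposal is correct in substance but follows a genuinely different route from the paper. The paper's proof is deliberately short and delegates almost everything to the stochastic-systems literature: implication \eqref{extended_stab}$\Rightarrow$\eqref{bounded_Z} is quoted from \citet{redbendamm}, the solution of \eqref{eqZ} is rewritten as $Z_\gamma(t,X_0X_0^\top)=\mathbb E\bigl[\Phi_w(t)X_0X_0^\top\Phi_w^\top(t)\bigr]$ for the fundamental solution $\Phi_w$ of the associated It\^o equation, the exponential decay and the existence of $V$ are imported from \citet{martin_igor}, and the identity $\Phi(t)X_0=V\tilde\Phi(t)\tilde X_0$ is transferred from its stochastic counterpart via Lemma \ref{fund_est}. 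You instead stay entirely on the deterministic side: Perron--Frobenius for the resolvent-positive operator $\mathcal L$ (which is exactly the machinery behind the cited corollary, cf.\ \citealp{damm}) gives \eqref{bounded_Z}; the monotone functional $\Xi$ yields integrability of $Z_\gamma$ and the Lyapunov equation $\mathcal L(P)=-X_0X_0^\top$; the kernel analysis identifies $\operatorname{im}(P)$ with the reachability subspace of $(A,N_k;X_0)$, which makes the matrix $V$ completely explicit (an orthonormal basis of $\operatorname{im}\int_0^\infty Z_\gamma\,ds$) rather than hidden behind a citation; and the strict inclusion \eqref{stab_min_rel} follows from your reachability contradiction, after which exponential decay and the zero-reduction-error identity drop out of the intertwining and ODE uniqueness. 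What your version buys is self-containedness and transparency about what $V$ actually is; what the paper's version buys is brevity and reuse of results that are needed elsewhere in the text anyway. The mathematical core -- cone-preserving spectral theory plus restriction to the reachable subspace -- is the same.

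One step you should spell out: the claim that $\operatorname{im}(P)$ is the \emph{minimal} $A$-, $N_k$-invariant subspace containing $\operatorname{im}(X_0)$. Your kernel analysis only gives one inclusion (that $\operatorname{im}(P)$ is invariant and contains $\operatorname{im}(X_0)$, hence contains the minimal such subspace); the reverse inclusion requires the Peano--Baker/Dyson expansion of $e^{t\mathcal L}(X_0X_0^\top)$ into terms of the form $MX_0X_0^\top M^\top$ with $M$ a product of $e^{sA}$'s and $N_k$'s, each of which has image inside the reachability space. You already invoke this expansion for positivity, so the ingredient is available, but the minimality is load-bearing -- without it the contradiction argument upgrading $s(\tilde{\mathcal L})\le 0$ to $s(\tilde{\mathcal L})<0$ does not close, since the reduced reachability space would then be a proper subspace of $\R^{\tilde n}$.
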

\begin{proof}
The proof is given in Appendix \ref{appendix_section2}.
\end{proof}
Theorem \ref{cond_stab} shows that if \eqref{extended_stab} is satisfied, the bilinear system represented by the matrices $A, N_k$ with initial conditions encoded by the matrix $X_0$ can be always reduced to a asymptotically stable system  in the sense of \eqref{stab_min_rel} with no reduction error. Below, we briefly discuss that asymptotic stability is a stronger concept than the criterion for the existence of the system Gramians given in \eqref{extended_stab}.
\begin{remark}
If \eqref{eqZ} is asymptotically stable there exists an $X>0$ such that \begin{align*}
 A  X+ X  A^\top + \frac{1}{\gamma^2}\sum_{k=1}^m N_{k} X N_{k}^\top= Y                                                                                                                           \end{align*}
given $Y<0$, see \citep{damm}. Setting $Y= -I - X_0 X_0^\top$ now  implies \eqref{extended_stab}.
\end{remark}

\section{Gramians and dominant subspaces}\label{section3}

\subsection{Gramians and dominant subspaces for \eqref{bilinear_x0}}\label{sec:Gram_x0}
We begin with investigating the homogeneous part of \eqref{biode} with non-zero initial states. 
To do so, we study two Gramians for \eqref{bilinear_x0} that provide information concerning the dominant subspaces of \eqref{bilinear_x0_state} and \eqref{bilinear_x0_out}, respectively.\smallskip

In order to identify the unimportant directions in \eqref{bilinear_x0_state} a Gramian $P_0$ is introduced below. Let $Z_\gamma = Z_\gamma(t, X_0 X_0^\top)$ as in \eqref{eqZ} and $K= X_0 X_0^\top$. The existence of the Gramians requires the asymptotic stability of \eqref{eqZ} which is stronger than $\sigma(A)\subset \mathbb C_-$. However, we can enforce this stronger type of stability by a sufficiently large $\gamma>0$ providing \begin{align}\label{stab}
 \sigma( A\otimes I+I\otimes  A+ \frac{1}{\gamma^2}\sum_{k=1}^m  N_k \otimes  N_k)\subset \mathbb C_-.  
\end{align}
The rescaled matrices $\frac{1}{\gamma} N_k$ in \eqref{stab} are associated to
the following equivalent reformulation of \eqref{bilinear_x0_state}:
\begin{align*}
 \dot x_{x_0}(t) = Ax_{x_0}(t) +  \sum_{k=1}^m \frac{1}{\gamma} N_k x_{x_0}(t) \gamma u_k(t),\quad x_{x_0}(0) = X_0 v_0,
\end{align*}
but it goes along with an enlarged control energy in the bilinearity. Now, we define \begin{align*}
  P_{0}:=\int_0^\infty Z_\gamma(s, X_0 X_0^\top) ds.                                                                                                                                                                                                                                                                                           \end{align*}
The dependence of $P_0$ on $\gamma$ is not explicitly indicated to simplify the notation. By definition of $P_0$ and the asymptotic stability of \eqref{eqZ}, we can immediately see that $P_0$ solves \begin{align}\label{reach_gram_bil_x0}
 A P_{0} + P_{0} A^\top + \frac{1}{\gamma^2}\sum_{k=1}^m N_k P_{0} N_k^\top = -X_0 X_0^\top.
\end{align}
We are now ready to establish an estimate identifying redundant information in \eqref{bilinear_x0_state}. Therefore, let us introduce an orthonormal basis $(p_{0, i})$ of eigenvectors of $P_0$. Consequently, we can write $x_{x_0}(t) = \sum_{i=1}^n \langle x_{x_0}(t), p_{0, i}\rangle_2\, p_{0, i}$. The following estimate for $\langle x_{x_0}(t), p_{0, i}\rangle_2$ allows us to find directions $p_{0, i}$ which barely contribute to the dynamics.
\begin{prop}\label{unimportant_state0}
Let $x_{x_0}$ denote the solution to \eqref{bilinear_x0_state} and $\gamma>0$ such that \eqref{stab} holds. Then, 
 \begin{align}\label{reach_est_bil}
  \left\|\langle x_{x_0}(\cdot), p_{0, i}\rangle_2 \right\|_{L^2} \leq \lambda_{0, i}^{\frac{1}{2}} \exp\left\{0.5\left\|\gamma u^{0}\right\|_{L^2}^2\right\} \left\|v_0\right\|_2,          
                    \end{align}
where $\lambda_{0, i}$ is the eigenvalue associated to $p_{0, i}$.
\end{prop}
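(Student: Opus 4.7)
The plan is to combine the explicit solution formula from Lemma \ref{lem_sol_rep} with the matrix estimate for $\Phi(t,s) K \Phi^\top(t,s)$ from Lemma \ref{fund_est}, and then use the fact that $P_0$ is precisely the time integral of $Z_\gamma(\cdot, X_0X_0^\top)$.

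First I would specialize Lemma \ref{lem_sol_rep} to the homogeneous setting $B=0$ and $t_0=0$ to write $x_{x_0}(t) = \Phi(t)X_0 v_0$. Then the quantity to be bounded becomes
\[
\langle x_{x_0}(t), p_{0,i}\rangle_2 = p_{0,i}^\top \Phi(t) X_0 v_0,
\]
and squaring together with $v_0 v_0^\top \leq \|v_0\|_2^2\, I$ yields the pointwise bound
\[
|\langle x_{x_0}(t), p_{0,i}\rangle_2|^2 \leq \|v_0\|_2^2 \cdot p_{0,i}^\top \Phi(t) X_0 X_0^\top \Phi^\top(t) p_{0,i}.
\]

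Next I would apply Lemma \ref{fund_est} with $K = X_0 X_0^\top$ and $s=0$, obtaining
\[
p_{0,i}^\top \Phi(t) X_0 X_0^\top \Phi^\top(t) p_{0,i} \leq \exp\!\Big\{\textstyle\int_0^t \|\gamma u^0(v)\|_2^2 \, dv\Big\}\, p_{0,i}^\top Z_\gamma(t) p_{0,i}.
\]
Since the exponent is monotone increasing in $t$, it can be uniformly bounded by $\exp\{\|\gamma u^0\|_{L^2}^2\}$, decoupling it from the integral in $t$.

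Integrating over $[0,\infty)$ and using stability \eqref{stab} to justify interchanging the integral with the quadratic form in $p_{0,i}$, I get
\[
\int_0^\infty |\langle x_{x_0}(t), p_{0,i}\rangle_2|^2\, dt \leq \|v_0\|_2^2 \, \exp\{\|\gamma u^0\|_{L^2}^2\} \, p_{0,i}^\top \Big(\!\int_0^\infty Z_\gamma(s, X_0 X_0^\top)\, ds\Big) p_{0,i}.
\]
The bracketed integral is exactly $P_0$ by its definition, and since $p_{0,i}$ is a unit eigenvector of $P_0$ with eigenvalue $\lambda_{0,i}$, the right-hand side equals $\lambda_{0,i}\,\exp\{\|\gamma u^0\|_{L^2}^2\}\,\|v_0\|_2^2$. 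Taking square roots delivers the stated inequality.

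There is no real obstacle here; the only delicate point is the uniform domination of $\exp\{\int_0^t\|\gamma u^0\|_2^2\,dv\}$ by $\exp\{\|\gamma u^0\|_{L^2}^2\}$, which is what allows the otherwise coupled space-time bound to factor into the neat product of $\lambda_{0,i}$ and a control-energy exponential.
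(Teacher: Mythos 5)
Your proposal is correct and follows essentially the same route as the paper's proof: solution representation via Lemma \ref{lem_sol_rep}, a Cauchy--Schwarz-type bound (your $v_0 v_0^\top \leq \|v_0\|_2^2 I$ step is the paper's Cauchy--Schwarz inequality in matrix form), the fundamental-solution estimate of Lemma \ref{fund_est} with $K = X_0X_0^\top$, uniform domination of the exponential factor, and identification of $\int_0^\infty Z_\gamma(s, X_0X_0^\top)\,ds$ with $P_0$ so that the quadratic form evaluates to $\lambda_{0,i}$. No gaps.
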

\begin{proof}
Based on Lemma \ref{lem_sol_rep} we find that 
  \begin{align*}
 x_{x_0}(t) = \Phi(t) x_0 = \Phi(t) X_0 v_0.
\end{align*}
Exploiting this leads to \begin{align*}
  \int_0^t \langle x_{x_0}(s), p_{0, i}\rangle_2^2 ds &=  \int_0^t \langle \Phi(s) X_0 v_0, p_{0, i}\rangle_2^2 ds = \int_0^t \langle  v_0, X_0^\top \Phi^\top(s) p_{0, i}\rangle_2^2 ds  \\
  &\leq \left\|v_0\right\|^2_2 p_{0, i}^\top  \int_0^t \Phi(s) X_0 X_0^\top \Phi^\top(s)  ds\; p_{0, i}
  \end{align*}
  using the inequality of Cauchy-Schwarz. Using Lemma \ref{fund_est}, we obtain\begin{align*}
  \int_0^t \langle x_{x_0}(s), p_{0, i}\rangle^2 ds
   &\leq \left\|v_0\right\|^2_2 \exp\left\{\int_0^t \left\|\gamma u^{0}(v)\right\|_2^2 dv\right\}   p_{0, i}^\top  \int_0^t Z_\gamma(s, X_0 X_0^\top)  ds\; p_{0, i} \\
   &\leq \left\|v_0\right\|^2_2\exp\left\{\left\|\gamma u^{0}\right\|_{L^2}^2\right\} p_{0, i}^\top  P_0\; p_{0, i}
   =\left\|v_0\right\|^2_2\exp\left\{\left\|\gamma u^{0}\right\|_{L^2}^2\right\}  \lambda_{0, i}.
                        \end{align*}
\end{proof}
Consequently, $x_{x_0}$ is small in the direction of an eigenvector $p_{0, i} = p_{0, i}(\gamma)$ of $P_0$ associated to a small eigenvalue $\lambda_{0, i} = \lambda_{0, i}(\gamma)$. This means that eigenspaces corresponding to small eigenvalues of $P_0$ are less relevant and hence can be neglected. \smallskip 

Let us now turn our attention to the choice of Gramians and the related dominant subspaces of \eqref{bilinear_x0_out}. 
We introduce the matrix-valued function $Z_\gamma^*= Z^*_\gamma(t, C^\top C)$ satisfying \begin{align}\label{eqZad}
 \dot { Z}_\gamma^*(t) = A^\top Z_\gamma^*(t) +  Z_\gamma^*(t) A+\frac{1}{\gamma^2}\sum_{k=1}^m N_k^\top Z_\gamma^*(t) N_k,\quad Z_\gamma^*(0) = C^\top C,
\end{align}
where the superscript $*$ indicates that the Lyapunov operator defining the right side of \eqref{eqZad} is the adjoint operator of the one entering \eqref{eqZ}. Let us further assume that \eqref{stab} holds. Then, we define \begin{align}\label{defnitionQ}                                                                                                                                                                                                                                
Q:=\int_0^\infty Z_\gamma^*(s, C^\top C) ds.                                                                                                                                                                                                \end{align}
By definition of $Q$ and the asymptotic stability of \eqref{eqZad}, we have \begin{align}\label{obs_gram_bil_x0}
 A^\top Q + Q A + \frac{1}{\gamma^2}\sum_{k=1}^m N_k^\top Q N_k = -C^\top C.
\end{align}
Let $0\leq t_0 <\infty$. We now expand $x_{x_0}(t_0)$ using 
an orthonormal basis $(q_{i})$ of eigenvectors of $Q$, i.e., we write $x_{x_0}(t_0) = \sum_{i=1}^n \langle x_{x_0}(t_0), q_{i}\rangle_2 q_{i}$.  The goal is to identify the directions $q_{i}$ which do not contribute significantly to the output $y_{x_0}$ on the interval $(t_0, \infty)$. We exploit the representation in Lemma \ref{lem_sol_rep} and obtain for $t\geq t_0$ that \begin{align}\label{outputexpansion}                                                                                                                                                                                                                                                                                                                                                                                            
  y_{x_0}(t) = C \Phi(t, t_0) x_{x_0}(t_0) =  \sum_{i=1}^n  C \Phi(t, t_0) q_{i} \langle x_{x_0}(t_0), q_{i}\rangle_2.                                                                                                                                                                                                                            \end{align}
Eigenvectors $q_{i}$ can now be neglected if the respective summand in \eqref{outputexpansion} is small in some norm. These summands are now analyzed in the following theorem.
\begin{prop}\label{redundant_out}
Let $(q_{i})$ be an orthonormal basis of eigenvectors of the Gramian $Q$ and $\gamma>0$ such that \eqref{stab} holds. Then, 
 \begin{align}\label{obs_est_bil}
  \left(\int_{t_0}^\infty \left\|C \Phi(t, t_0) q_{i}\right\|^2_2 dt\right)^{\frac{1}{2}} \leq \mu_{i}^{\frac{1}{2}} \exp\left\{0.5\left\|\gamma u^{0}\right\|_{L^2}^2\right\},          
                    \end{align}
where $\mu_{i}$ is the eigenvalue associated to $q_{i}$.
\end{prop}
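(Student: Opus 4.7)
The plan is to rewrite the squared output norm as a trace of a ``sandwich'' of the form $\Phi K \Phi^\top$ so that Lemma \ref{fund_est} applies directly, and then to identify the resulting coefficient as $\mu_i$ by duality between the two algebraic Lyapunov equations \eqref{reach_gram_bil_x0} and \eqref{obs_gram_bil_x0}. Note that $\Phi^\top(t,t_0)\,C^\top C\,\Phi(t,t_0)$ does not fit the hypothesis of Lemma \ref{fund_est}, so a direct imitation of the proof of Proposition \ref{unimportant_state0} is not possible; the trace reformulation is the device that circumvents this.

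First, I would use the cyclic invariance of the trace to write
\[
\|C\Phi(t,t_0)q_i\|_2^2 \;=\; \operatorname{tr}\bigl[C\,\Phi(t,t_0)\,(q_i q_i^\top)\,\Phi^\top(t,t_0)\,C^\top\bigr].
\]
Applying Lemma \ref{fund_est} with $K = q_i q_i^\top \geq 0$ yields
\[
\Phi(t,t_0)\,q_i q_i^\top\,\Phi^\top(t,t_0) \;\leq\; \exp\!\Bigl\{\!\int_{t_0}^t \|\gamma u^0(v)\|_2^2\,dv\Bigr\}\; Z_\gamma(t-t_0,\,q_i q_i^\top).
\]
Because $C^\top C\geq 0$, the trace is monotone against this matrix inequality; combined with the trivial estimate $\int_{t_0}^t \|\gamma u^0\|_2^2\,dv \leq \|\gamma u^0\|_{L^2}^2$, integration over $t\in(t_0,\infty)$ and the substitution $s=t-t_0$ produce
\[
\int_{t_0}^\infty \|C\Phi(t,t_0)q_i\|_2^2\,dt \;\leq\; \exp\!\bigl\{\|\gamma u^0\|_{L^2}^2\bigr\}\;\operatorname{tr}[C\,P_{q_i}\,C^\top],
\]
where $P_{q_i}:=\int_0^\infty Z_\gamma(s,q_i q_i^\top)\,ds$. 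Under the stability assumption \eqref{stab}, $P_{q_i}$ solves $A P_{q_i} + P_{q_i} A^\top + \tfrac{1}{\gamma^2}\sum_k N_k P_{q_i} N_k^\top = -q_i q_i^\top$ by the same argument that delivers \eqref{reach_gram_bil_x0}.

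Finally, I would invoke the duality between the Lyapunov equations for $P_{q_i}$ and $Q$. Substituting the expression for $-C^\top C$ coming from \eqref{obs_gram_bil_x0}, redistributing the factors $A$, $A^\top$ and $N_k,N_k^\top$ onto $P_{q_i}$ via the cyclic trace, and then using the Lyapunov equation satisfied by $P_{q_i}$ yield
\[
\operatorname{tr}[C P_{q_i} C^\top] \;=\; -\operatorname{tr}\!\bigl[Q\bigl(A P_{q_i} + P_{q_i} A^\top + \tfrac{1}{\gamma^2}\sum_k N_k P_{q_i} N_k^\top\bigr)\bigr] \;=\; \operatorname{tr}[Q\,q_i q_i^\top] \;=\; q_i^\top Q q_i \;=\; \mu_i.
\]
Taking a square root delivers \eqref{obs_est_bil}. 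The only step requiring genuine care is this final duality identity: the cyclic redistribution of the Lyapunov operators must be done so that the adjoint operator acting on $Q$ is turned exactly into the primal one acting on $P_{q_i}$. The remaining ingredients — the trace rewriting, Lemma \ref{fund_est}, and trace monotonicity — are immediate.
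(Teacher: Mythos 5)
Your proposal is correct and follows essentially the same route as the paper's proof: the trace reformulation of $\|C\Phi(t,t_0)q_i\|_2^2$, the application of Lemma \ref{fund_est} with $K=q_iq_i^\top$, and the final identification $\trace(C^\top C\int_0^\infty Z_\gamma(s,q_iq_i^\top)\,ds)=q_i^\top Q q_i=\mu_i$ via the duality of the two Lyapunov equations are exactly the steps the paper carries out. No gaps.
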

\begin{proof}
With Lemma \ref{fund_est}, we find \begin{align*}
  \int_{t_0}^\infty \left\|C \Phi(t, t_0) q_{i}\right\|^2_2 dt &=     \int_{t_0}^\infty q_{i}^\top  \Phi(t, t_0) C^\top C \Phi(t, t_0) q_{i} dt\\
  &= \int_{t_0}^\infty \trace\left(C \Phi(t, t_0) q_{i}q_{i}^\top\Phi^\top(t, t_0) C^\top\right) dt \\&\leq 
  \int_{t_0}^\infty \trace\left(C \exp\left\{\int_{t_0}^t \left\|\gamma u^{0}(v)\right\|_2^2 dv\right\}  Z_\gamma(t-t_0, q_{i}q_{i}^\top) C^\top\right) dt \\&\leq 
  \exp\left\{\left\|\gamma u^{0}\right\|_{L^2}^2\right\}\int_{0}^\infty \trace\left(C  Z_\gamma(s, q_{i}q_{i}^\top) C^\top\right) ds\\
 &= \exp\left\{\left\|\gamma u^{0}\right\|_{L^2}^2\right\}\trace\left(C^\top C  \int_{0}^\infty Z_\gamma(s, q_{i}q_{i}^\top)ds \right).
                   \end{align*}
$\int_{0}^\infty Z_\gamma(s, q_{i}q_{i}^\top)ds$ solves \eqref{reach_gram_bil_x0} with right hand side $q_{i}q_{i}^\top$. Inserting \eqref{obs_gram_bil_x0} for $C^\top C $ above, we can see that $\trace\left(C^\top C  \int_{0}^\infty Z_\gamma(s, q_{i}q_{i}^\top)ds \right)= q_{i}^\top Q q_{i} = \mu_i$. This concludes the proof.
\end{proof}
Estimate \eqref{obs_est_bil} now tells us that $q_i= q_i(\gamma)$ is an unimportant direction in $x_{x_0}(t_0)$ for each $t_0\geq 0$ if $\mu_i= \mu_i(\gamma)$ is small since these vectors have a low impact on the output $y_{x_0}(t)$, $t\geq t_0$. Consequently, eigenspaces of $Q$ corresponding to small eigenvalues can be removed from the system. 

\subsection{Gramians and dominant subspaces for \eqref{bilinear_B}}\label{sec:Gram_B}

We introduce a reachability Gramian $P_B$ as a positive definite solution to 
\begin{align}\label{newgram2}
 A^\top P_B^{-1}+P_B^{-1}A+ \frac{1}{\gamma^2}\sum_{k=1}^m  N^\top_k P_B^{-1} N_k \leq -P_B^{-1}BB^\top P_B^{-1}.
                                       \end{align}
Such a solution exists given that \eqref{stab} holds, see \citep[Lemma III.1]{dammbennernewansatz} or more generally  \citep[Proposition 3.1]{redmannspa2}                                     
Notice that an inequality is considered in \eqref{newgram2}, since the existence of a positive definite solution of the associated equality is not ensured. $P_B$ identifies directions in the state equation \eqref{bilinear_B_state} that can be removed from the system. To see this, let $(p_{B, i})$ an orthonormal basis of eigenvectors of $P_B$, such that $x_{B}(t) = \sum_{i=1}^n \langle x_{B}(t), p_{B, i}\rangle_2\; p_{B, i}$. As in Proposition \ref{unimportant_state0} an estimate for $\langle x_{B}(t), p_{B, i}\rangle_2$ can be found. However, the norm is a different one.
\begin{prop}
Let $x_{B}$ denote the solution to \eqref{bilinear_B_state} and $\gamma>0$ such that \eqref{stab} holds. Then, 
 \begin{align}\label{diffreachjaneintype2}
\sup_{t\geq 0}\left\vert\langle x_B(t), p_{B, i}  \rangle_2\right\vert \leq \lambda_{B, i}^{\frac{1}{2}} \left\|u\right\|_{L^2} \exp\left(0.5 \left\|\gamma  u^0\right\|_{L^2}^2\right),
\end{align}
where $\lambda_{B, i}$ is the eigenvalue associated to $p_{B, i}$.
\end{prop}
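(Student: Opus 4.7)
The plan is to introduce the Lyapunov-type functional $V(t) := x_B(t)^\top P_B^{-1} x_B(t)$ and to show, by combining the state equation \eqref{bilinear_B_state} with the matrix inequality \eqref{newgram2}, that $V$ satisfies a Gronwall-type differential inequality whose right-hand side depends only on $u$ and $u^0$. Once this is established, Cauchy--Schwarz in the $P_B^{-1}$-inner product converts the bound on $V$ into the desired bound on $|\langle x_B(t), p_{B, i}\rangle_2|$.

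Concretely, differentiating $V$ along \eqref{bilinear_B_state} yields
\begin{align*}
\dot V(t) = x_B^\top (A^\top P_B^{-1} + P_B^{-1} A) x_B + 2 x_B^\top P_B^{-1} B u + 2 \sum_{k=1}^m x_B^\top P_B^{-1} N_k x_B\, u_k.
\end{align*}
For the control term, Young's inequality applied to $a = B^\top P_B^{-1} x_B$ and $b = u$ gives $2 x_B^\top P_B^{-1} B u \leq x_B^\top P_B^{-1} B B^\top P_B^{-1} x_B + \|u\|_2^2$. For each bilinear term, I split the product using the scaling parameter $\gamma$: $2 x_B^\top P_B^{-1} N_k x_B\, u_k \leq \gamma^2 u_k^2\, V + \gamma^{-2} x_B^\top N_k^\top P_B^{-1} N_k x_B$, where the summands with $N_k = 0$ contribute zero to the left-hand side, so that on the right $\sum_k u_k^2$ may be replaced by $\|u^0\|_2^2$. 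Summing these estimates and invoking \eqref{newgram2} renders the entire state-quadratic contribution non-positive, leaving
\begin{align*}
\dot V(t) \leq \|u(t)\|_2^2 + \gamma^2 \|u^0(t)\|_2^2\, V(t), \qquad V(0) = 0.
\end{align*}

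Gronwall's lemma then gives $V(t) \leq \|u\|_{L^2}^2 \exp(\gamma^2 \|u^0\|_{L^2}^2)$ uniformly in $t \geq 0$, with a right-hand side independent of $t$. To conclude, I use $P_B p_{B, i} = \lambda_{B, i} p_{B, i}$ together with Cauchy--Schwarz in the $P_B^{-1}$-inner product,
\begin{align*}
\langle x_B(t), p_{B, i}\rangle_2^2 = \bigl((P_B^{1/2} p_{B, i})^\top (P_B^{-1/2} x_B(t))\bigr)^2 \leq \lambda_{B, i}\, V(t),
\end{align*}
so taking square roots and the supremum over $t \geq 0$ produces the claimed estimate (note that $\gamma^2 \|u^0\|_{L^2}^2 = \|\gamma u^0\|_{L^2}^2$).

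I expect the main obstacle to be the Young's-inequality balancing in the bilinear terms: the scaling factors $\gamma^2$ and $\gamma^{-2}$ must be chosen precisely so that, after summation, the resulting quadratic form in $x_B$ matches the left-hand side of \eqref{newgram2} augmented by $P_B^{-1} B B^\top P_B^{-1}$. With that choice, \eqref{newgram2} eliminates all state-dependent contributions at once; the subsequent Gronwall and Cauchy--Schwarz steps are then routine.
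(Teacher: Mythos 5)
Your proof is correct, and it takes a different route from the paper: the paper does not give a self-contained argument at all, but instead observes that the $\gamma=1$ case is a special case of a result in the stochastic bilinear literature (Redmann, \emph{Int.\ J.\ Control} 2020, Section 2.1) and obtains general $\gamma$ by the rescaling $N_k x_B u_k \mapsto \tfrac{1}{\gamma}N_k x_B\,\gamma u_k$. Your direct derivation via the Lyapunov functional $V(t)=x_B(t)^\top P_B^{-1}x_B(t)$ is sound in every step: the Young splittings are calibrated exactly so that the state-quadratic form becomes $x_B^\top\bigl(A^\top P_B^{-1}+P_B^{-1}A+\gamma^{-2}\sum_k N_k^\top P_B^{-1}N_k+P_B^{-1}BB^\top P_B^{-1}\bigr)x_B$, which \eqref{newgram2} makes nonpositive; the Gronwall bound $V(t)\le\|u\|_{L^2}^2\exp(\|\gamma u^0\|_{L^2}^2)$ then follows from $V(0)=0$, and the weighted Cauchy--Schwarz step correctly produces the factor $p_{B,i}^\top P_B p_{B,i}=\lambda_{B,i}$. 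What your approach buys is self-containedness and transparency: it shows precisely why the Gramian is defined through the \emph{inequality} \eqref{newgram2} (only domination of the quadratic form is needed, not an equality), why the bound is a $\sup_t$ bound rather than an $L^2$ bound (the Lyapunov function is controlled pointwise in $t$), and where the convention $u^0$ enters (only the channels with $N_k\neq 0$ generate the exponential factor). The paper's citation route is shorter and leverages the parallel stochastic theory, where the same dissipation inequality is established; the two arguments are morally the same mechanism, but yours is the one a reader can verify without leaving the paper. The only minor point worth adding is that $V$ is absolutely continuous for $u\in L^2$, so the differential inequality holds almost everywhere and Gronwall should be invoked in integral form.
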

\begin{proof}
The result for $\gamma = 1$ is a special case of \citep[Section 2.1]{redstochbil}. Rescaling $N_k x_{B}(t) u_k(t)\mapsto  \frac{1}{\gamma} N_k x_{B}(t) \gamma u_k(t)$ in \eqref{bilinear_B_state} immediately provides the desired estimate for general $\gamma$. 
\end{proof}
By \eqref{diffreachjaneintype2}, we can see that  $p_{B, i} = p_{B, i}(\gamma)$ is less relevant for the dynamics if $\lambda_{B, i} = \lambda_{B, i}(\gamma)$ is small. For that reason, one is interested in computing a $P_B$ with possibly small eigenvalues since such a solution to \eqref{newgram2} characterizes the negligible information best. Therefore, determining $P_B$ becomes an optimization problem of, e.g., 
 minimizing $\trace(P_B)$ subject to \eqref{newgram2}. In Section \ref{mor_section}, a MOR procedure is discussed that is based on $P_B$ and $Q$ (the relevance of $Q$ for \eqref{bilinear_B_out} is presented below). It turns out that the error of such MOR schemes are characterized by the truncated eigenvalues of $P_B Q$, see Theorem \ref{special_error_bound2}, meaning that a large set of small eigenvalues of $P_B Q$ yields a low approximation error. For that reason, one may also think of computing $P_B$ based on minimizing $\trace(P_B Q)$. However, numerical experiments indicate that including $Q$ in the optimization procedure leads to slightly worse results.\smallskip

The dominant subspace of \eqref{bilinear_B_out} can be found with the same Gramian $Q$, defined in \eqref{defnitionQ}                                                                                                                                                                                                                                
as in the case of $y_{x_0}$. We expand 
$x_{B}(t_0) = \sum_{i=1}^n \langle x_{B}(t_0), q_{i}\rangle_2 q_{i}$ for $0\leq t_0<\infty$. By Lemma \ref{lem_sol_rep} we have  \begin{align}\label{irrelevant_Bu}                                                                                                                                                                                                                                                                                                                                                                                           
  y_{B}(t) &= C \Phi(t, t_0) x_{B}(t_0) + \int_{t_0}^t C \Phi(t,s) B u(s) ds\\ \nonumber
  &=  \sum_{i=1}^n  C \Phi(t, t_0) q_{i} \langle x_{B}(t_0), q_{i}\rangle_2 + \int_{t_0}^t C \Phi(t,s) B u(s) ds                                                                                                                                                                                                                            \end{align}
for $t\geq t_0$. Therefore, the direction $q_i$ is less relevant if $C \Phi(t, t_0) q_{i}$ is small. The corresponding estimate for this expression has already been established in Proposition \ref{redundant_out}. Consequently, $q_{i}$ is also negligible for $y_B$ if the eigenvalue $\mu_{i}$ is small. 
That the same $Q$ is used in both subsystems is not surprising since the structural difference of \eqref{bilinear_x0} and \eqref{bilinear_B} lies in $Bu$, a term that does not depend on the state itself. Hence, it is irrelevant in the observability context looking at the second summand in \eqref{irrelevant_Bu}. One might choose a different parameter $\gamma$ in \eqref{obs_gram_bil_x0} in each subsystem. However, we believe that no large benefit can be expected by using different scalings. Therefore, we choose $\gamma$ to be the same for both systems below.

\section{Gramian-based model order reduction}\label{mor_section}

\subsection{Balancing of subsystems  \eqref{bilinear_x0} and \eqref{bilinear_B}}\label{sec:balancing}
We have seen in Sections \ref{sec:Gram_x0} and \ref{sec:Gram_B} that the eigenspaces corresponding to small eigenvalues of $P_0$ and $Q$ are not important for subsystem \eqref{bilinear_x0} and the ones of $P_B$ and $Q$ are less relevant for subsystem \eqref{bilinear_B}. Therefore, we construct a state space transformation ensuring that $P_0$ and $Q$ are diagonal and equal, meaning that $p_{0, i} = q_i = e_i$, where $e_i$ is the $i$th unit vector in $\R^n$. The $i$th diagonal entry of the diagonalized Gramians then determines how much the $i$th component of the state variable contributes to the dynamics. This procedure of simultaneously diagonalizing the Gramians is called balancing. After conducting this procedure for \eqref{bilinear_x0}, another balancing transformation is constructed for \eqref{bilinear_B}, guaranteeing that $P_B$ and $Q$ are diagonal and equal as well. Subsequently, the unimportant information in both subsystems can be removed, leading to the reduced models \eqref{rom1} and \eqref{rom2}.\smallskip

The procedure sketched above now works as follows. Based on the assumption that  $P_{0}, Q >0$, we can construct the following regular matrices and their inverses 
\begin{align}\label{balancedtrans}
\mathcal S= \Theta^{-\frac{1}{2}}\mathcal U^\top L^\top,\quad \mathcal S^{-1}=\mathcal K \mathcal V \Theta^{-\frac{1}{2}}
\quad\text{and}\quad S=\Sigma^{-\frac{1}{2}}U^\top L^\top,\quad S^{-1}=K V\Sigma^{-\frac{1}{2}},
\end{align}
where $\Theta=\diag(\theta_{1},\ldots,\theta_{n})>0$ and 
$\Sigma=\diag(\sigma_{1},\ldots,\sigma_{n})>0$  with $\theta_i$ and $\sigma_i$ being the square root of the $i$th eigenvalue of $P_{0} Q$ and $P_{B} Q$, respectively. These diagonal entries of $\Theta$ and $\Sigma$ are called Hankel singular values (HSVs) of \eqref{bilinear_x0} and \eqref{bilinear_B}. The other ingredients in \eqref{balancedtrans} are computed by the factorizations $P_{0} = \mathcal K \mathcal K^\top$, $P_{B} = K K^\top$, $Q=LL^\top$ and the singular value decompositions of $\mathcal K^\top L = \mathcal  V\Theta \mathcal U^\top$ and $K^\top L = V\Sigma U^\top$. \smallskip

Replacing $(A, X_0, C, N_k)$ by the transformed matrices \begin{align}\label{partmatricesx0}
\mathcal S{A}\mathcal S^{-1}= \smat{\mathcal A}_{11}&{\mathcal A}_{12}\\ 
{\mathcal A}_{21}&{\mathcal A}_{22}\srix,\quad \mathcal S{X_0} = \smat{X}_{0, 1}\\ {X}_{0, 2}\srix,\quad  
{C\mathcal S^{-1}} = \smat{\mathcal C}_1 &{\mathcal C}_2\srix,\quad \mathcal S{N_k}\mathcal S^{-1}= \smat {\mathcal N}_{k, 11}&{\mathcal N}_{k, 12}\\ 
{\mathcal N}_{k, 21}&{\mathcal N}_{k, 22}\srix, \end{align}
in \eqref{bilinear_x0} with ${\mathcal A}_{11},\mathcal N_{k,11}\in\R^{r_{x_0}\times r_{x_0}}$, $X_{0,1} \in \R^{r_{x_0} \times q}$, and $\mathcal{C}_1 \in \R^{p \times r_{x_0}}$, we obtain the following system 
\begin{equation}\label{sys:original_trans_x0}
\begin{aligned}
\smat {\dot {\mathbf x}}_1(t) \\ {\dot {\mathbf x}}_2(t)\srix &= \smat{\mathcal A}_{11}&{\mathcal A}_{12}\\ 
{\mathcal A}_{21}&{\mathcal A}_{22}\srix \smat {\mathbf x}_1(t) \\{\mathbf x}_2(t)\srix + \sum_{k=1}^m \smat {\mathcal N}_{k, 11}&{\mathcal N}_{k, 12}\\ 
{\mathcal N}_{k, 21}&{\mathcal N}_{k, 22}\srix \smat {\mathbf x}_1(t) \\{\mathbf x}_2(t)\srix u_k(t),\quad \smat {\mathbf x}_1(0) \\{\mathbf x}_2(0)\srix=\smat{X}_{0, 1}\\ {X}_{0, 2}\srix v_0 \\
y_{x_0}(t) &= \smat{\mathcal C}_1 &{\mathcal C}_2\srix \smat {\mathbf x}_1(t) \\{\mathbf x}_2(t)\srix,\quad t\geq 0,
\end{aligned}
\end{equation}
having the same output as \eqref{bilinear_x0}. Above, we set $\mathcal S x_{x_0}(t)=\smat {\mathbf x}_1(t) \\{\mathbf x}_2(t)\srix$. The Gramian of \eqref{sys:original_trans_x0} are \begin{align}\label{balanced_cal_S}
\mathcal S P_{0}  \mathcal S^\top = \mathcal S^{-\top}Q\mathcal S^{-1}= \Theta  = \smat{\Theta}_{1}& \\ 
 &{\Theta}_{2}\srix
\end{align}
with ${\Theta}_{1}\in\R^{r_{x_0}\times r_{x_0}}$ and ${\Theta}_{2} = \diag(\theta_{r_{x_0}+1},\ldots,\theta_{n})$ contains the $n-r_{x_0}$ smallest HSVs of the subsystem. 
\smallskip

The same way, $(A, B, C, N_k)$ is replaced by \begin{align}\label{partmatricesB}
S{A}S^{-1}= \smat{A}_{11}&{A}_{12}\\ 
{A}_{21}&{A}_{22}\srix,\quad  S{B} = \smat{B}_{1}\\ {B}_{2}\srix,\quad  
{C S^{-1}} = \smat{C}_1 &{C}_2\srix,\quad S{N_k} S^{-1}= \smat { N}_{k, 11}&{ N}_{k, 12}\\ 
{ N}_{k, 21}&{ N}_{k, 22}\srix, \end{align}
in \eqref{bilinear_B} with ${A}_{11}, N_{k,11}\in\R^{r_{B}\times r_B}$, $B_1 \in \R^{r_{B} \times m}$, and $C_1 \in \R^{p \times r_{B}}$ such that we have
\begin{equation}\label{sys:original_trans_B}
\begin{aligned}
\smat {\dot x}_1(t) \\ {\dot x}_2(t)\srix &= \smat{A}_{11}&{A}_{12}\\ 
{A}_{21}&{A}_{22}\srix \smat x_1(t) \\x_2(t)\srix +  \smat{B}_1\\ {B}_2\srix u(t) + \sum_{k=1}^m \smat {N}_{k, 11}&{N}_{k, 12}\\ 
{N}_{k, 21}&{N}_{k, 22}\srix \smat x_1(t) \\x_2(t)\srix u_k(t),\\
y_B(t) &= \smat{C}_1 &{C}_2\srix \smat x_1(t) \\x_2(t)\srix,\quad t\geq 0,
\end{aligned}
\end{equation}
where $ S x_{B}(t)=\smat {x}_1(t) \\{x}_2(t)\srix$ and the new Gramians are \begin{align*}
S P_{B}  S^\top = S^{-\top}QS^{-1}= \Sigma  = \smat{\Sigma}_{1}& \\ 
 &{\Sigma}_{2}\srix
\end{align*}
with ${\Sigma}_{1}\in\R^{r_{B}\times r_{B}}$ and ${\Sigma}_{2} = \diag(\sigma_{r_B+1},\ldots,\sigma_{n})$.
\begin{remark}
 It is important to point out that the balancing transformations $\mathcal S$ and $S$ depend on $\gamma$ since the Gramians are functions of this parameter. Consequently, the balancing realizations in \eqref{partmatricesx0} and \eqref{partmatricesB}, as well as the later ROMs, depend on $\gamma$.
\end{remark}
% \igor{
% \begin{remark} Notice that such a balancing transformation can also exist if the Gramians  are semidefinite. To illustrate this fact, let us consider a bilinear subsystem as \eqref{bilinear_x0} in given by the matrices 
% \[A = \begin{bmatrix}
% -1 & 0 & 0 
% \\
% 0 & -1 & 0
% \\
% 0 & 0 & -1 
% \end{bmatrix}, \quad N = \begin{bmatrix}
% 0 & 0 & 0 
% \\
% 1 & 0 & 0
% \\
% 0 & 0 & 0 
% \end{bmatrix}, \quad X_0 = \begin{bmatrix}
% 2 
% \\
% 0
% \\
% 0
% \end{bmatrix} \quad \text{and}~\, C = \begin{bmatrix}
% \sqrt{3} & 0 & 0 
% \\
% 0 & \sqrt{2} & 0
% \end{bmatrix}. \]
% Hence, for this systems we must have
% \[ P_{0} = Q = \begin{bmatrix}
% 2 & 0 & 0 
% \\
% 0 & 1 & 0 
% \\
% 0 & 0 & 0 
% \end{bmatrix}. \]
% As a consequence, the above subsystem is balanced even if the matrix $P_0$ and $Q$ are semidefinite.
% \end{remark}
% }
\subsection{Model order reduction for subsystem \eqref{bilinear_x0}}\label{MOR_x0}

In this section, we discuss two different MOR techniques for \eqref{bilinear_x0} that rely on the balancing procedure described in Section \ref{sec:balancing}. We already know that the state variables $\mathbf x_2$ in the balanced realization \eqref{sys:original_trans_x0} are less relevant since they are associated to the small HSVs $\theta_{r_{x_0}+1},\ldots,\theta_{n}$. A ROM \eqref{rom1} can now be obtained by neglecting $\mathbf x_2$. A first option is to truncate the second line of the state equation in \eqref{sys:original_trans_x0} and to set $\mathbf x_2(t) = 0$ in the remaining parts of the subsystem. This methods is called balanced truncation and leads to a ROM with \begin{align}\label{bt_x0}
                                                                                                                                                                                                                                                                \left(\tilde A_{x_0}, \tilde X_0, \tilde C_{x_0},  \tilde N_{x_0, k}\right) = \left(\mathcal A_{11}, X_{0, 1}, \mathcal C_{1},  \mathcal N_{k, 11}\right).                                                                                                                                                                                                                                                \end{align}
Alternatively, one can argue that due to \eqref{reach_est_bil}, $\mathbf x_2$ is close to its equilibrium (especially if the system is uncontrolled). Hence, it is in a quasi steady state, motivating to set $\dot{\mathbf x}_2(t)=0$ in \eqref{sys:original_trans_x0}. If we further neglect ${\mathcal N}_{k, 21}$ and ${\mathcal N}_{k, 22}$ in the resulting algebraic constrain in order to avoid a control dependence of the matrices in the ROM, we obtain $\mathbf x_2(t)=-\mathcal A_{22}^{-1} \mathcal A_{21} \mathbf x_1(t)$. Inserting this for $\mathbf x_2$ in \eqref{sys:original_trans_x0} leads to a ROM with  \begin{align}   \label{spa_x0}                                                                                                                                                                                                                                                                                                                                                                                                                                                            
                                                                                                                                                                                                                                                              \left(\tilde A_{x_0}, \tilde X_0, \tilde C_{x_0},  \tilde N_{x_0, k}\right) = \left(\bar {\mathcal A}, X_{0, 1}, \bar {\mathcal C},  \bar {\mathcal N}_{k}\right),                                                                                                                                                                                                                               \end{align}                                                                                                                                                                                                                                        where $\bar {\mathcal A} := \mathcal A_{11} - \mathcal A_{12} \mathcal A_{22}^{-1} \mathcal A_{21}$, $\bar {\mathcal C}:= \mathcal C_1-\mathcal C_2 \mathcal A_{22}^{-1} \mathcal A_{21}$ and $\bar {\mathcal N}_{k} := \mathcal N_{k, 11} - \mathcal N_{k, 12} \mathcal A_{22}^{-1} \mathcal A_{21}$. It is important to point out that both ROMs \eqref{bt_x0} and \eqref{spa_x0} share the same initial condition matrix $\tilde X_0$.
Notice that the structure preservation in the ROM is also desired here, which is motivated by the existence of an error bound that we prove later. This bound can only be achieved between systems having the same structure. We refer to a related SPA MOR scheme for \eqref{bilinear_B} in \citep{hartmann}, where such a reduced system was derived by an averaging principle representing a more detailed motivation than given here.
\begin{remark}
A result on stability preservation for BT has already been established in \citep{redbendamm}. Given $\Theta>0$ and $\sigma(\Theta_1)\cap \sigma(\Theta_2) = \emptyset$, it was shown that \begin{align*}
 \sigma( \mathcal A_{11}\otimes I+I\otimes  \mathcal A_{11}+ \frac{1}{\gamma^2}\sum_{k=1}^m  \mathcal N_{k, 11} \otimes  \mathcal N_{k, 11})\subset \mathbb C_-.  \end{align*}
 Whether SPA guarantees this type of stability under the same assumption is an open question. However, for SPA it can be proved that the eigenvalue of the above Kronecker matrix involving the matrices in \eqref{spa_x0} are in $\overline {\mathbb C_-}$, see \citep{redSPA}. Since $\Theta>0$ and $\sigma(\Theta_1)\cap \sigma(\Theta_2) = \emptyset$ might not be always given, stability preservation and the existence of Gramians for the two different balancing related methods are discussed in the following, only assuming $\Theta_1>0$.
\end{remark}
\begin{thm}\label{red_gram_ex}
Let $\mathcal S$ be the balanced transformation providing \eqref{balanced_cal_S} with $\Theta_1>0$ and consider the associated balanced realization
in \eqref{partmatricesx0}. Given the matrix differential equations \begin{align*}
\dot { \tilde Z}_\gamma(t) &= \tilde A_{x_0} \tilde Z_\gamma(t) +  \tilde Z_\gamma(t) \tilde A_{x_0}^\top+\frac{1}{\gamma^2}\sum_{k=1}^m \tilde N_{x_0, k} \tilde Z_\gamma(t) \tilde N_{x_0, k}^\top,\quad \tilde Z_\gamma(0) = \tilde X_0 \tilde X_0^\top,\\
 \dot { \tilde Z}_\gamma^*(t) &= \tilde A_{x_0}^\top \tilde Z_\gamma^*(t) +  \tilde Z_\gamma^*(t) \tilde A_{x_0}+\frac{1}{\gamma^2}\sum_{k=1}^m \tilde N_{x_0, k}^\top \tilde Z_\gamma^*(t) \tilde N_{x_0, k},\quad \tilde Z_\gamma^*(0) = \tilde C_{x_0}^\top \tilde C_{x_0},
\end{align*}
 the Gramians 
$\tilde P:=\int_0^\infty \tilde Z_\gamma(s) ds$ and $\tilde Q:=\int_0^\infty \tilde Z_\gamma^*(s) ds$ 
exist for reduced system \eqref{rom1} with coefficients as in \eqref{bt_x0} (BT). If instead the ROM by SPA defined in \eqref{spa_x0} is considered, the existence of $\tilde Q$ is ensured.
\end{thm}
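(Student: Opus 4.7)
The strategy is to reduce each assertion to a direct application of Theorem \ref{cond_stab}: for each claimed Gramian I will exhibit a positive definite matrix $X$ satisfying the extended-stability inequality \eqref{extended_stab} for the reduced-order coefficients, the natural candidate in every case being $X=\Theta_1$. The common starting point is obtained by sandwiching the balanced Lyapunov equations \eqref{reach_gram_bil_x0} and \eqref{obs_gram_bil_x0} by $\mathcal S\cdot\mathcal S^\top$ and $\mathcal S^{-\top}\cdot\mathcal S^{-1}$ respectively. Using \eqref{balanced_cal_S} and the block partitioning in \eqref{partmatricesx0}, this yields
\begin{align*}
\mathcal S A\mathcal S^{-1}\Theta + \Theta \mathcal S^{-\top}A^\top\mathcal S^\top + \tfrac{1}{\gamma^2}\sum_k \mathcal S N_k\mathcal S^{-1}\Theta \mathcal S^{-\top}N_k^\top\mathcal S^\top &= -\mathcal S X_0X_0^\top\mathcal S^\top,\\
\mathcal S^{-\top}A^\top\mathcal S^\top\Theta + \Theta \mathcal S A\mathcal S^{-1} + \tfrac{1}{\gamma^2}\sum_k \mathcal S^{-\top}N_k^\top\mathcal S^\top\Theta \mathcal S N_k\mathcal S^{-1} &= -\mathcal S^{-\top}C^\top C\mathcal S^{-1},
\end{align*}
whose $(1,1)$-blocks on the right-hand side are exactly $X_{0,1}X_{0,1}^\top$ and $\mathcal C_1^\top\mathcal C_1$.

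For the BT ROM \eqref{bt_x0} it suffices to extract the $(1,1)$-block from each identity. A direct block multiplication produces the terms $\mathcal N_{k,11}\Theta_1 \mathcal N_{k,11}^\top + \mathcal N_{k,12}\Theta_2 \mathcal N_{k,12}^\top$ in the reachability case and $\mathcal N_{k,11}^\top\Theta_1\mathcal N_{k,11} + \mathcal N_{k,21}^\top\Theta_2\mathcal N_{k,21}$ in the observability case. Dropping the positive semidefinite trailing $\Theta_2$-contributions turns the two equalities into
\begin{align*}
\mathcal A_{11}\Theta_1+\Theta_1\mathcal A_{11}^\top + \tfrac{1}{\gamma^2}\textstyle\sum_k \mathcal N_{k,11}\Theta_1 \mathcal N_{k,11}^\top &\leq -X_{0,1}X_{0,1}^\top,\\
\mathcal A_{11}^\top\Theta_1+\Theta_1\mathcal A_{11} + \tfrac{1}{\gamma^2}\textstyle\sum_k \mathcal N_{k,11}^\top\Theta_1 \mathcal N_{k,11} &\leq -\mathcal C_1^\top\mathcal C_1.
\end{align*}
Since $\Theta_1>0$, Theorem \ref{cond_stab} applied with $X=\Theta_1$ to $(\tilde A_{x_0},\tilde N_{x_0,k})$ with initial-state matrix $\tilde X_0$, and to its adjoint with right-hand side $\tilde C_{x_0}^\top\tilde C_{x_0}$, supplies the existence of $\tilde P$ and $\tilde Q$ in the BT case.

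For the SPA ROM \eqref{spa_x0} a pure block truncation is not enough, so instead I use the Schur-complement projection $V$ consisting of the $n\times r_{x_0}$ block matrix with upper block $I$ and lower block $-\mathcal A_{22}^{-1}\mathcal A_{21}$. Two identities drive the argument: $\mathcal A V$ has zero lower block and upper block $\bar{\mathcal A}$, while $\tilde{\mathcal C}V=\bar{\mathcal C}$, hence $V^\top\Theta \mathcal A V=\Theta_1\bar{\mathcal A}$ and $V^\top\tilde{\mathcal C}^\top\tilde{\mathcal C}V=\bar{\mathcal C}^\top\bar{\mathcal C}$. For the bilinear term, $\mathcal N_k V$ has upper block $\bar{\mathcal N}_k$ and lower block $\star_k:=\mathcal N_{k,21}-\mathcal N_{k,22}\mathcal A_{22}^{-1}\mathcal A_{21}$, so $V^\top\mathcal N_k^\top\Theta \mathcal N_k V=\bar{\mathcal N}_k^\top\Theta_1\bar{\mathcal N}_k+\star_k^\top\Theta_2\star_k$. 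Sandwiching the observability-type identity above by $V^\top$ and $V$ and dropping the positive semidefinite $\Theta_2$-piece gives
\[
\bar{\mathcal A}^\top\Theta_1 + \Theta_1\bar{\mathcal A} + \tfrac{1}{\gamma^2}\textstyle\sum_k \bar{\mathcal N}_k^\top \Theta_1 \bar{\mathcal N}_k \leq -\bar{\mathcal C}^\top\bar{\mathcal C},
\]
to which Theorem \ref{cond_stab}, applied to the adjoint of the SPA ROM with $X=\Theta_1>0$, delivers the existence of $\tilde Q$. The main subtlety, and the reason the theorem does not assert the existence of $\tilde P$ under SPA, is that no analogous one-sided projection converts the \emph{reachability}-type balanced equation into a clean $(\bar{\mathcal A},\bar{\mathcal N}_k)$ inequality with right-hand side $-X_{0,1}X_{0,1}^\top$.
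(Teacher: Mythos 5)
Your proposal is correct and follows essentially the same route as the paper: both extract the $(1,1)$-blocks of the balanced Lyapunov equations and drop the positive semidefinite $\Theta_2$-terms to obtain the inequalities that feed into Theorem \ref{cond_stab} with $X=\Theta_1$. Your SPA step via the projection $V=\bigl[\begin{smallmatrix}I\\-\mathcal A_{22}^{-1}\mathcal A_{21}\end{smallmatrix}\bigr]$ is algebraically identical to the paper's manipulation with $\mathcal A^{-1}$ (its first block column post-multiplied by $\bar{\mathcal A}$ \emph{is} $V$), arriving at the same inequality \eqref{eq_red_spa}, so the difference is purely presentational.
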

\begin{proof}
The proof can be found in Appendix \ref{appendix_section4}.
\end{proof}      
Due to Theorem \ref{red_gram_ex} technical assumptions like $\theta_{r_{x_0}}\neq \theta_{r_{x_0}+1}$ can be omitted in the error analysis if BT is considered since the reduced Gramians will still exist. Furthermore, given $\Theta_1>0$, Theorem \ref{cond_stab}  and \eqref{romgrambt} tell us that the ROM by BT can always be reduced to a system satisfying \eqref{stab_min_rel} without causing an additional error. 
Whether $\tilde P$ generally exists for SPA remains open and is therefore always assumed below. Now, we establish error bounds for the  BT and SPA procedures. Firstly, we prove an intermediate lemma in order to show this result.
\begin{lem}\label{basis_bound}
Let $y_{x_0}$ be the output of \eqref{bilinear_x0}
and $\tilde y_{x_0}$ be the reduced order output of system \eqref{rom1}. Then, we have \begin{align*}
\left\|y_{x_0}(t) - \tilde y_{x_0}(t) \right\|_2^2 \leq \trace\left(\smat C & -\tilde C_{x_0}\srix Z_\gamma^e(t) \smat C^\top \\ -\tilde C_{x_0}^\top\srix\right)  \exp\left\{\int_0^t \left\|\gamma u^{0}(v)\right\|_2^2 dv\right\}\left\|v_0\right\|_2^2,
\end{align*}
where $ Z_\gamma^e(t)$, $t\geq 0$, satisfies the matrix differential equation
\begin{align*}
 \dot { Z}_\gamma^e(t) &= \smat A& 0\\0 &\tilde A_{x_0}\srix   Z_\gamma^e(t) +  Z_\gamma^e(t) \smat A^\top& 0\\0 &\tilde A_{x_0}^\top\srix  +\frac{1}{\gamma^2}\sum_{k=1}^m \smat N_k& 0\\0 &\tilde N_{x_0, k}\srix Z_\gamma^e(t) \smat N_k^\top& 0\\0 &\tilde N_{x_0, k}^\top\srix ,\\ Z_\gamma^e(0) &= \smat X_0\\ \tilde X_0\srix\smat X_0^\top & \tilde X_0^\top\srix.
\end{align*}
\end{lem}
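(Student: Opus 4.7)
The idea is to write the error as the output of a single, augmented bilinear system whose fundamental solution has block diagonal structure, and then apply Lemma \ref{fund_est} once to this bigger system.

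First, using Lemma \ref{lem_sol_rep} (with $B=0$), I would write $y_{x_0}(t) = C\Phi(t) X_0 v_0$ and $\tilde y_{x_0}(t) = \tilde C_{x_0} \tilde\Phi(t)\tilde X_0 v_0$, where $\tilde \Phi$ denotes the fundamental solution of the ROM \eqref{rom1}. Introducing the block diagonal matrices $A^e := \diag(A,\tilde A_{x_0})$, $N_k^e := \diag(N_k,\tilde N_{x_0,k})$, the block diagonal matrix $\Phi^e(t) := \diag(\Phi(t),\tilde \Phi(t))$ is easily seen to satisfy the defining integral equation of Definition \ref{defn_fund} for the bilinear system with coefficients $(A^e, N_k^e)$ and identical control $u$. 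Setting $X_0^e := \smat X_0 \\ \tilde X_0\srix$ and $C^e := \smat C & -\tilde C_{x_0}\srix$, the error becomes
\[ y_{x_0}(t) - \tilde y_{x_0}(t) = C^e \,\Phi^e(t)\, X_0^e\, v_0. \]

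Next I would square both sides and use the trace cyclicity together with the estimate $v_0 v_0^\top \leq \left\|v_0\right\|_2^2\, I$ (valid since $\left\|v_0\right\|_2^2\, I - v_0v_0^\top$ is symmetric positive semidefinite). This yields
\[ \left\|y_{x_0}(t) - \tilde y_{x_0}(t)\right\|_2^2 = \trace\!\left(C^e \Phi^e(t) X_0^e v_0 v_0^\top (X_0^e)^\top (\Phi^e)^\top(t) (C^e)^\top\right) \leq \left\|v_0\right\|_2^2 \trace\!\left(C^e \Phi^e(t) X_0^e (X_0^e)^\top (\Phi^e)^\top(t) (C^e)^\top\right). \]
Finally, I would apply Lemma \ref{fund_est} to the augmented bilinear system, taking $K = X_0^e (X_0^e)^\top$; this is legitimate because the lemma applies to any bilinear system and the control $u^0$ entering the bilinear part of the augmented system is the same as in \eqref{bilinear_x0}. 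The resulting bound on $\Phi^e(t) K (\Phi^e)^\top(t)$ is in terms of $Z^e_\gamma(t)$ solving precisely the matrix differential equation stated in the lemma, since block diagonality of $A^e$ and $N_k^e$ is preserved by the Lyapunov-type operator on the right hand side, while the initial condition $K$ is not block diagonal. Preservation of positive semidefiniteness under the trace then delivers the claimed inequality after pulling out the scalar exponential factor.

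The conceptually non-trivial step is realising that both the original and reduced output can be encoded into one augmented fundamental solution so that Lemma \ref{fund_est} is applicable once and the $\exp\{\int_0^t \|\gamma u^0\|_2^2 dv\}$-factor does not proliferate; however, this is just an algebraic rearrangement. The remaining steps are routine manipulations using the trace and the positive semidefinite order.
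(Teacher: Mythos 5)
Your proposal is correct and follows essentially the same route as the paper's proof: both encode the error as the output of the augmented block-diagonal bilinear system with $C^e=\smat C & -\tilde C_{x_0}\srix$ and $K=\smat X_0\\ \tilde X_0\srix\smat X_0^\top & \tilde X_0^\top\srix$, pull out the $\left\|v_0\right\|_2^2$ factor, and apply Lemma \ref{fund_est} once to that system. Your use of $v_0v_0^\top\leq\left\|v_0\right\|_2^2 I$ under the trace is just a rephrasing of the paper's Frobenius-norm step, and your explicit remark that the positive semidefinite order is preserved under $M\mapsto\trace(C^e M (C^e)^\top)$ makes a step precise that the paper leaves implicit.
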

\begin{proof}
 We refer to appendix \ref{appendix_section4} for the proof.
\end{proof}

\begin{thm}\label{special_error_bound}
Let $y_{x_0}$ be the output of \eqref{bilinear_x0}
given that \eqref{stab} holds for a sufficiently large $\gamma>0$. Suppose that $\tilde y_{x_0}$ is the reduced order output of system \eqref{rom1}, where the matrices $\left(\tilde A_{x_0}, \tilde X_0, \tilde C_{x_0},  \tilde N_{x_0, k}\right)$ are either given by BT stated in \eqref{bt_x0} or by SPA defined in \eqref{spa_x0} given a balancing transformation $\mathcal S$ as in \eqref{balanced_cal_S} with $\Theta_1>0$.  We assume that the reduced system Gramian $\tilde P$ for SPA exists. Defining 
 $Y =  \smat{Y}_{1} \\ Y_2 \srix$ as the unique solution to 
 \begin{align}\label{yequation}
\smat{\mathcal A}_{11}&{\mathcal A}_{12}\\ 
{\mathcal A}_{21}&{\mathcal A}_{22}\srix \smat{Y}_{1} \\ Y_2 \srix+ \smat{Y}_{1} \\ Y_2 \srix \tilde A^\top_{x_0} +\frac{1}{\gamma^2}\sum_{k=1}^{m}  \smat {\mathcal N}_{k, 11}&{\mathcal N}_{k, 12}\\ 
{\mathcal N}_{k, 21}&{\mathcal N}_{k, 22}\srix \smat{Y}_{1} \\ Y_2 \srix {\tilde N}^\top_{x_0} = -\smat{X}_{0, 1}\\ {X}_{0, 2}\srix  X_{0, 1}^\top,
    \end{align}     
using the balanced realization
in \eqref{partmatricesx0}, it holds that \begin{align}\label{interprete_bound}
 \left\|y_{x_0} - \tilde y_{x_0} \right\|_{L^2} \leq   \left(\trace(\Theta_2 \mathcal W_{x_0})\right)^{\frac{1}{2}}  \exp\left\{0.5\left\|\gamma u^{0}\right\|_{L^2}^2\right\} \left\|v_0\right\|_2.
\end{align}
The above weight is \begin{align*}
 \mathcal W_{x_0} = X_{0, 2} X_{0, 2}^\top + 2 Y_2{\mathbf A}_{21}^\top + 2\smat{\mathcal A}_{21}&{\mathcal A}_{22}\srix Y {\bar{\mathbf A}}_{21}^\top+ \frac{1}{\gamma^2}\sum_{k=1}^{m} 2 \smat
{\mathcal N}_{k, 21}&{\mathcal N}_{k, 22}\srix Y{\mathbf N}_{k, 21}^\top -    \mathbf N_{k, 21}\tilde P \mathbf N_{k, 21}^\top,
\end{align*}
where $(\mathbf A_{21}, \bar {\mathbf A}_{21}, {\mathbf N}_{k, 21}) =(\mathcal A_{21}, 0, \mathcal N_{k, 21})$ for BT and $(\mathbf A_{21}, \bar {\mathbf A}_{21}, {\mathbf N}_{k, 21})=(0, -  {\mathcal A}_{22}^{-1}{\mathcal A}_{21}, {\mathcal N}_{k, 21}-{\mathcal N}_{k, 22}{\mathcal A}_{22}^{-1}{\mathcal A}_{21})$ for SPA.
\end{thm}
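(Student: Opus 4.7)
The plan is to turn Lemma \ref{basis_bound} into an $L^2$-bound by a direct time integration, identify the resulting error Gramian in balanced coordinates, and then use the Lyapunov equations for $\Theta$, $\tilde P$ and the cross Gramian $Y$ to reduce the trace to the stated form $\trace(\Theta_2 \mathcal W_{x_0})$. First, since $\exp\{\int_0^t \|\gamma u^0(v)\|_2^2\, dv\} \le \exp\{\|\gamma u^0\|_{L^2}^2\}$ for every $t\ge 0$, I would integrate the pointwise estimate of Lemma \ref{basis_bound} over $t\in[0,\infty)$ and exchange integration with trace (Fubini) to obtain
\begin{equation*}
\|y_{x_0}-\tilde y_{x_0}\|_{L^2}^2 \le \exp\{\|\gamma u^0\|_{L^2}^2\}\|v_0\|_2^2\, \trace\!\left(\begin{bmatrix} C & -\tilde C_{x_0}\end{bmatrix}\mathcal P^e \begin{bmatrix} C^\top \\ -\tilde C_{x_0}^\top\end{bmatrix}\right),
\end{equation*}
where $\mathcal P^e := \int_0^\infty Z_\gamma^e(s)\,ds$. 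Letting $t\to\infty$ in the ODE for $Z_\gamma^e$ yields that $\mathcal P^e$ solves the generalized Lyapunov equation with block-diagonal $A$- and $N_k$-coefficients and right-hand side $-\begin{bmatrix}X_0\\\tilde X_0\end{bmatrix}\begin{bmatrix}X_0^\top&\tilde X_0^\top\end{bmatrix}$; its existence follows from that of $P_0$ (by \eqref{stab}), of $\tilde P$ (Theorem \ref{red_gram_ex}), and of the cross block.

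Next, I would apply the block similarity $\mathrm{diag}(\mathcal S,I)$ to the Lyapunov equation for $\mathcal P^e$. The (1,1) block is thereby diagonalized to $\Theta=\mathrm{diag}(\Theta_1,\Theta_2)$, the (2,2) block remains $\tilde P$, and the (1,2) block becomes a matrix $Y$ which, upon the partition $Y=\begin{bmatrix}Y_1\\ Y_2\end{bmatrix}$, satisfies exactly \eqref{yequation}. Writing out the trace in these coordinates (and using $C\mathcal S^{-1}=[\mathcal C_1,\mathcal C_2]$) reduces the problem to showing
\begin{equation*}
\mathcal T := \trace\bigl(\mathcal C_1\Theta_1\mathcal C_1^\top + \mathcal C_2\Theta_2\mathcal C_2^\top + \tilde C_{x_0}\tilde P\tilde C_{x_0}^\top - 2\mathcal C_1 Y_1\tilde C_{x_0}^\top - 2\mathcal C_2 Y_2\tilde C_{x_0}^\top\bigr) = \trace(\Theta_2\mathcal W_{x_0}),
\end{equation*}
after which taking the square root of the preceding inequality produces \eqref{interprete_bound}.

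The remaining and most substantial step is to perform this algebraic reduction. I would use three ingredients: the balanced observability equation $\mathcal A^\top\Theta+\Theta\mathcal A+\gamma^{-2}\sum_k \mathcal N_k^\top\Theta\mathcal N_k=-\mathcal C^\top\mathcal C$ (obtained from \eqref{obs_gram_bil_x0} via similarity), the reachability equation satisfied by $\tilde P$ (Theorem \ref{red_gram_ex}), and equation \eqref{yequation} for $Y$. I would substitute $-\mathcal C^\top\mathcal C$ from the first equation into $\mathcal T$ and use cyclicity of the trace, so that the $\mathcal C_1\Theta_1\mathcal C_1^\top$-contribution is re-expressed through the balanced blocks of $\mathcal A$ and $\mathcal N_k$. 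Combined with the $\tilde P$-equation and with \eqref{yequation}, the upper-left $(r_{x_0}\times r_{x_0})$ parts cancel, because for BT the $\tilde P$-equation and the top block of \eqref{yequation} both carry the right-hand side $-X_{0,1}X_{0,1}^\top$, and for SPA an analogous Schur-complement identity plays the same role. What remains factors through the lower block carrying $\Theta_2$ and assembles into $\trace(\Theta_2\mathcal W_{x_0})$. The placeholders $\mathbf A_{21},\bar{\mathbf A}_{21},\mathbf N_{k,21}$ arise naturally from this unified substitution: for BT the Schur correction $\mathcal A_{22}^{-1}\mathcal A_{21}$ is absent and $\bar{\mathbf A}_{21}=0$, whereas for SPA this correction is moved from the dynamics into the output via $\bar{\mathcal A},\bar{\mathcal C},\bar{\mathcal N}_k$, producing the nonzero $\bar{\mathbf A}_{21}$, the modified $\mathbf N_{k,21}$, and the negative contribution $-\gamma^{-2}\mathbf N_{k,21}\tilde P\mathbf N_{k,21}^\top$.

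The hard part will be precisely this last step. Doing the bookkeeping of all blockwise substitutions, trace-cyclic permutations and sign conventions so that the outcome matches $\mathcal W_{x_0}$ verbatim is delicate; in particular, for SPA one must verify that the terms generated by $\bar{\mathcal A}=\mathcal A_{11}-\mathcal A_{12}\mathcal A_{22}^{-1}\mathcal A_{21}$ and $\bar{\mathcal C}=\mathcal C_1-\mathcal C_2\mathcal A_{22}^{-1}\mathcal A_{21}$ rearrange exactly into the $\bar{\mathbf A}_{21}$-weighted summand of $\mathcal W_{x_0}$, while the bilinear correction $\mathcal N_{k,12}\mathcal A_{22}^{-1}\mathcal A_{21}$ inside $\bar{\mathcal N}_k$ is responsible for both the modified $\mathbf N_{k,21}$ and the extra $-\mathbf N_{k,21}\tilde P\mathbf N_{k,21}^\top$ term; tracking these identifications constitutes the longest and most error-prone portion of the derivation.
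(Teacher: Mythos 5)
Your overall route coincides with the paper's: integrate Lemma \ref{basis_bound} over $[0,\infty)$, identify the blocks of $\int_0^\infty Z_\gamma^e(t)\,dt$ as $P_0$, $\tilde P$ and a cross Gramian, pass to balanced coordinates so that the cross block becomes the $Y$ of \eqref{yequation}, and then manipulate generalized Lyapunov equations until the trace collapses onto $\Theta_2$. Up to and including the reduction of the problem to evaluating $\mathcal T$, your plan matches the paper's proof step for step.

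However, the part you defer as ``the hard part'' is where the proof actually lives, and your sketch of it contains a concrete misconception that indicates the reduction would not come out right. The term $-\frac{1}{\gamma^2}\sum_k \mathbf N_{k,21}\tilde P\,\mathbf N_{k,21}^\top$ in $\mathcal W_{x_0}$ does \emph{not} originate from the Schur-complement corrections of SPA, as you claim: it is present for BT as well (with $\mathbf N_{k,21}=\mathcal N_{k,21}$). In the paper it arises from the identity $\trace\bigl(X_{0,1}^\top(\tilde Q-\Theta_1)X_{0,1}\bigr) = -\trace\bigl(\Theta_2\,\frac{1}{\gamma^2}\sum_k \mathbf N_{k,21}\tilde P\,\mathbf N_{k,21}^\top\bigr)$, i.e.\ from the mismatch between the reduced observability Gramian $\tilde Q$ and $\Theta_1$, which is extracted by subtracting the truncated balanced observability equation from the equation defining $\tilde Q$ and then pairing with the $\tilde P$-equation. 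Your plan never introduces $\tilde Q$ at all, yet the paper's cancellation of the $C$-dependent terms relies essentially on the two dualities $\trace(\mathcal C\Theta\mathcal C^\top)=\trace(X_0^\top\Theta X_0)$ (from comparing \eqref{bal_equation_x02} with \eqref{bal_equation_x0}) and $\trace(\tilde C_{x_0}\tilde P\tilde C_{x_0}^\top)=\trace(X_{0,1}^\top\tilde Q X_{0,1})$, which convert output-weighted traces into initial-condition-weighted ones and make the $X_{0,2}X_{0,2}^\top$ summand of $\mathcal W_{x_0}$ appear. Without this ingredient, substituting $-\mathcal C^\top\mathcal C$ from the observability equation and hoping the ``upper-left parts cancel'' does not produce the stated weight; in particular the first $r_{x_0}$ rows of \eqref{yequation} must be invoked explicitly to trade $\trace(\Theta_1[\cdots])$ for $-\trace(X_{0,1}^\top\Theta_1 X_{0,1})$ plus the $\Theta_2$-weighted remainder. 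You would need to supply the derivation of the unified expression for $-\tilde C_{x_0}^\top[\mathcal C_1\ \mathcal C_2]$ (equations \eqref{cceq_bt} and \eqref{cceq_spa} in the paper) and the $\tilde Q$-based identity above before the claimed equality $\mathcal T=\trace(\Theta_2\mathcal W_{x_0})$ can be considered established.
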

\begin{proof}
We moved the proof of this theorem to Appendix \ref{appendix_section4} in order to improve the readability of the paper.
\end{proof} 
The result of Theorem \ref{special_error_bound} shows an error bound that depends on the truncated HSVs. Choosing $r_{x_0}$ such that $\Theta_2$ is small therefore ensures a small error and hence a good approximation.

\subsection{Model order reduction for subsystem \eqref{bilinear_B}}\label{MOR_B}

In this section, BT and SPA are studied for \eqref{bilinear_B}.  As for the methods considered in Section \ref{MOR_x0} they rely on the balancing procedure described in Section \ref{sec:balancing}. However, these methods are not necessarily structured preserving and rely on a different type of Gramian $P_B$. In order to find a ROM for \eqref{bilinear_B}, state variables $x_2$ in  \eqref{sys:original_trans_B} need to be removed. These variables belong to the small HSVs $\sigma_{r_{B}+1},\ldots,\sigma_{n}$ and are hence negligible. A ROM \eqref{rom2} by BT is here obtained by truncating the second line of the state equation in \eqref{sys:original_trans_B} and to set $x_2(t) = 0$ in the remaining parts of the subsystem. This results in  \begin{align}\label{bt_B}
                                                                                                                                                                                                                                                                \left(\tilde A_{B}, \tilde B, \tilde C_{B}, \tilde D, \tilde E_{k},  \tilde N_{B, k}\right) = \left(A_{11}, B_{1}, C_{1}, 0, 0, N_{k, 11}\right).                                                                                                                                                                                                                                                \end{align}
Using similar arguments as in Section \ref{MOR_x0}, $\dot{x}_2(t)=0$ can be set alternatively in \eqref{sys:original_trans_B}. Additionally ignoring  the terms depending on ${N}_{k, 21}$ and ${N}_{k, 22}$, we obtain $x_2(t)=-A_{22}^{-1}\left(A_{21} x_1(t)+ B_2 u(t)\right)$. Inserting this for $x_2$ in \eqref{sys:original_trans_B}, a ROM \eqref{rom2} is obtained that has a different structure than \eqref{bilinear_B}. The associated matrices are  \begin{align}  \label{spa_B}                                                                                                                                                                                                                                                                                                                                                                                                                                                             
                                                                                                                                                                                                                                                              \left(\tilde A_{B}, \tilde B, \tilde C_{B}, \tilde D, \tilde E_{k},  \tilde N_{B, k}\right) = \left(\bar A, \bar B, \bar C, \bar D, \bar E_{k},  \bar {N}_{k}\right),                                                                                                                                                                                                                               \end{align}                                                                                                                                                                                                                                        where $\bar A :=A_{11}- A_{12} A_{22}^{-1} A_{21}$,  $\bar B:=B_1-A_{12}A_{22}^{-1} 
B_2$, $\bar C:=C_1-C_2 A_{22}^{-1} A_{21}$, $\bar D:=-C_2 A_{22}^{-1} B_2$, $\bar E_{k}:=-N_{k, 12} A_{22}^{-1} B_2$ and $\bar N_k:=N_{k, 11}-N_{k, 12} A_{22}^{-1} A_{21}$. It is important to mention that the main motivation behind the ROM given by \eqref{spa_B} is an error bound based on the sum of truncated HSVs that we state below. This shows the actual benefit of the structure change.   
\begin{remark}
Notice that both balancing related methods above preserve the type of stability given in \eqref{stab}. If $\tilde A_{B}$ and $\tilde N_{B, k}$ are as in  \eqref{bt_B} or \eqref{spa_B} and if additionally $\Sigma>0$ and $\sigma(\Sigma_1)\cap \sigma(\Sigma_2)=\emptyset$, we have \begin{align*}
 \sigma( \tilde A_{B}\otimes I+I\otimes  \tilde A_{B}+ \frac{1}{\gamma^2}\sum_{k=1}^m  \tilde N_{B, k} \otimes  \tilde N_{B, k})\subset \mathbb C_-.  \end{align*}
 This was proved in \citep[Theorem II.2]{bennerdammcruz} for BT and shown in \citep[Section 4.2]{redmannspa2} for SPA in the context of stochastic systems.
\end{remark}

\begin{thm}
 \label{special_error_bound2}
Let $y_{B}$ be the output of \eqref{bilinear_B}
given that \eqref{stab} holds for a sufficiently large $\gamma>0$. Suppose that $\tilde y_{B}$ is the reduced order output of system \eqref{rom2}, where the matrices $\left(\tilde A_{B}, \tilde B, \tilde C_{B}, \tilde D, \tilde E_{k},  \tilde N_{B, k}\right)$ are either given by BT stated in \eqref{bt_B} or by SPA defined in \eqref{spa_B}. Then, we have \begin{align*}
 \left\|y_{B} - \tilde y_{B} \right\|_{L^2} \leq   \left(2 \sum_{i= r_B+1}^n \sigma_i \right) \exp\left\{0.5\left\|\gamma u^{0}\right\|_{L^2}^2\right\} \left\|u\right\|_{L^2}, 
\end{align*}
where $\sigma_{r_B+1}, \ldots, \sigma_n$ are the truncated small HSVs of system \eqref{bilinear_B}.
\end{thm}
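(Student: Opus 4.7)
The plan is to derive the bound by combining two ingredients: the known $L^2$-error bounds for BT and SPA of bilinear systems with zero initial condition from the cited literature, and the $\gamma$-scaling trick that has been used throughout the paper. First I would rewrite \eqref{bilinear_B_state} in the rescaled form $\dot x_B(t) = A x_B(t) + B u(t) + \sum_{k=1}^m \frac{1}{\gamma} N_k x_B(t) (\gamma u_k(t))$, so that the bilinearity is driven by the enlarged control $\gamma u^0$ in the sense of \eqref{equ0}. In this rescaled realization the Lyapunov inequality \eqref{newgram2} becomes the standard reachability-type inequality for the triple $(A,B,\frac{1}{\gamma}N_k)$ and \eqref{obs_gram_bil_x0} becomes the observability equation for $(A,C,\frac{1}{\gamma}N_k)$. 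Hence the balancing transformation $S$ and the Hankel singular values $\sigma_i$ from Section \ref{sec:balancing} are exactly those of this rescaled bilinear system.

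Next I would invoke the established $L^2$-error bounds for this rescaled system with zero initial data. For BT as in \eqref{bt_B}, the bound $\|y_B - \tilde y_B\|_{L^2} \le 2(\sum_{i=r_B+1}^n \sigma_i)\,\|u\|_{L^2}$ multiplied by the exponential factor is the statement of the relevant theorem in \citep{redstochbil} applied to the rescaled coefficients; the factor $\exp(0.5\|\gamma u^0\|_{L^2}^2)$ arises because Lemma \ref{fund_est} produces precisely this constant once $N_k$ is replaced by $\frac{1}{\gamma}N_k$ and $u_k$ by $\gamma u_k$ in the bilinear part. For SPA as in \eqref{spa_B}, the same bound follows from the corresponding result in \citep{redmannstochbilspa}; the feedthrough matrix $\bar D$ and the quadratic correction matrices $\bar E_k$ in \eqref{spa_B} are chosen precisely so that the quasi-steady-state elimination of $x_2$ absorbs the contributions that would otherwise spoil the $2\sum \sigma_i$ constant.

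The internal strategy in those cited proofs, which I would sketch briefly for completeness, has three steps. First, one embeds the error $y_B-\tilde y_B$ as the output of a bilinear error system of dimension $n+r_B$ with zero initial condition driven by $u$ and $u^0$, analogously to the construction in Lemma \ref{basis_bound} but for the inhomogeneous subsystem. Second, applying Lemma \ref{lem_sol_rep} and Lemma \ref{fund_est} to this augmented system extracts the factor $\|u\|_{L^2}$ from the driving term $Bu$ and the factor $\exp(0.5\|\gamma u^0\|_{L^2}^2)$ from the fundamental solution, reducing the task to bounding a trace expression involving the error Gramian. Third, one telescopes the full reduction from dimension $n$ down to $r_B$ as a sequence of single-HSV truncations; each one-step truncation contributes a summand of size $2\sigma_i$ through the balanced identity $S P_B S^\top = S^{-\top} Q S^{-1} = \Sigma$ together with the Lyapunov inequalities \eqref{newgram2} and \eqref{obs_gram_bil_x0}, and the triangle inequality across the chain yields the announced sum with constant $2$.

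The main obstacle I expect is verifying that the SPA version \eqref{spa_B} with nonzero $\tilde D$ and $\tilde E_k$ falls exactly into the framework of \citep{redmannstochbilspa}; once this identification is made and the rescaling in the first paragraph is carried out carefully so that the $\gamma$-dependence of the exponential factor matches the one in the statement of Theorem \ref{special_error_bound2}, the bound is obtained.
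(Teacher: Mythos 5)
Your proposal matches the paper's own proof: the paper likewise obtains the $\gamma=1$ case directly from \citep[Theorem 3.1]{redstochbil} (BT) and \citep[Theorem 3]{redmannstochbilspa} (SPA), and then extends to general $\gamma$ by the rescaling $N_k x_B(t)u_k(t)\mapsto \frac{1}{\gamma}N_k x_B(t)\,\gamma u_k(t)$, exactly as in your first two paragraphs. Your additional sketch of the internal telescoping argument inside the cited results is consistent but not needed for the paper's (citation-based) proof.
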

\begin{proof}
The above results for $\gamma=1$ are special cases of the ones in \citep[Theorem 3.1]{redstochbil} (BT) and  \citep[Theorem 3]{redmannstochbilspa} (SPA). Rescaling $N_k x_{B}(t) u_k(t)\mapsto  \frac{1}{\gamma} N_k x_{B}(t) \gamma u_k(t)$ in \eqref{bilinear_B_state} provides the formulation of this theorem for general $\gamma$.
\end{proof}
Theorem \ref{special_error_bound2} shows that the truncated HSVs determine the error of the approximation. Hence, these values are a good indicator for the expected error and can be chosen to find a suitable reduced order dimension $r_B$. Notice that for the HSVs it holds that $\sigma_i= \sigma_i(\gamma)$ since the underlying Gramians $P_B$ and $Q$ depend on the rescaling factor $\gamma$. 

\subsection{Model order reduction and an error bound for \eqref{fullsys}}

In this section, we exploit the results of Sections \ref{MOR_x0} and \ref{MOR_B} in order to find an error bound between the output $y$ of \eqref{fullsys} and some reduced output $\tilde y$ which we construct as the sum of the outputs $\tilde y_{x_0}$ and $\tilde y_{B}$ of subsystems \eqref{rom1} and \eqref{rom2}. We discussed BT and SPA for the homogeneous and the inhomogeneous part of the bilinear system in order to obtain $\tilde y_{x_0}$ and $\tilde y_{B}$. All approaches are designed to provide an error bound in $L^2$, which enables us to combine them in the following theorem.
\begin{thm}\label{main_result}
Suppose that \eqref{stab} holds for a sufficiently large $\gamma>0$. Let $y$ be the output of the original system \eqref{fullsys} and let us define the reduced output $\tilde y = \tilde y_{x_0}+\tilde y_{B}$, where $\tilde y_{x_0}$ is the quantity of interest in \eqref{rom1} and $\tilde y_{B}$ the one of \eqref{rom2}. We assume that \eqref{rom1} is the ROM of either BT stated in \eqref{bt_x0} or by SPA defined in \eqref{spa_x0} with $\Theta_1>0$, state dimension $r_{x_0}$ and an existing reduced Gramian $\tilde P$ for SPA. Let \eqref{rom2} be an $r_B$-dimensional ROM computed by BT with matrices \eqref{bt_B} or by SPA defined through \eqref{spa_B}.
Then, there exist a matrix $\mathcal W_{x_0}$ defined in Theorem \ref{special_error_bound} such that \begin{align*}
 \left\|y - \tilde y \right\|_{L^2} \leq  \left[ \left(\trace(\Theta_2 \mathcal W_{x_0})\right)^{\frac{1}{2}}  \left\|v_0\right\|_2+\left(2 \sum_{i= r_B+1}^n \sigma_i \right)  \left\|u\right\|_{L^2}\right]\exp\left\{0.5\left\|\gamma u^{0}\right\|_{L^2}^2\right\}
\end{align*}
with ${\Theta}_{2} = \diag(\theta_{r_{x_0}+1},\ldots,\theta_{n})$, 
where $\theta_{r_{x_0}+1}, \ldots, \theta_n$ and $\sigma_{r_B+1}, \ldots, \sigma_n$ are the truncated small HSVs of \eqref{bilinear_x0} and \eqref{bilinear_B}, respectively.
\end{thm}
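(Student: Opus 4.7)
My plan is to reduce this statement directly to the two previously established error bounds via the triangle inequality in $L^2$. The splitting introduced at the beginning of the paper gives the exact decomposition $y = y_{x_0} + y_B$ of the full-order output, and by definition the approximation is $\tilde y = \tilde y_{x_0} + \tilde y_B$. Consequently
$$y - \tilde y = (y_{x_0} - \tilde y_{x_0}) + (y_B - \tilde y_B),$$
and applying Minkowski's inequality componentwise to the Euclidean norm inside the $L^2$-integral yields
$$\|y - \tilde y\|_{L^2} \leq \|y_{x_0} - \tilde y_{x_0}\|_{L^2} + \|y_B - \tilde y_B\|_{L^2}.$$

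From here I would simply invoke the two subsystem error bounds. For the first summand, the hypotheses of Theorem \ref{special_error_bound} are exactly those imported into the statement of the present theorem (stability condition \eqref{stab} for a sufficiently large $\gamma$, the balancing transformation $\mathcal S$ with $\Theta_1>0$, and, in the SPA case, the assumed existence of the reduced Gramian $\tilde P$). Applying that theorem yields
$$\|y_{x_0} - \tilde y_{x_0}\|_{L^2} \leq \left(\trace(\Theta_2 \mathcal W_{x_0})\right)^{1/2} \exp\{0.5\|\gamma u^{0}\|_{L^2}^2\} \|v_0\|_2,$$
with the same weight matrix $\mathcal W_{x_0}$ described there. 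For the second summand, Theorem \ref{special_error_bound2} immediately gives
$$\|y_B - \tilde y_B\|_{L^2} \leq \Bigl(2 \sum_{i= r_B+1}^n \sigma_i\Bigr)\exp\{0.5\|\gamma u^{0}\|_{L^2}^2\} \|u\|_{L^2}.$$

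The final step is purely algebraic: both bounds share the identical exponential factor $\exp\{0.5\|\gamma u^0\|_{L^2}^2\}$, so it factors out of the sum, leaving precisely the bracketed expression claimed in the theorem. There is no real obstacle here; all the analytic content --- the Gramian framework, construction of $\mathcal W_{x_0}$, trace/HSV estimates, and the handling of both BT and SPA structures --- has been absorbed into the two cited theorems. The only bookkeeping item to verify is that the hypotheses carried into this theorem (in particular $\Theta_1>0$, existence of $\tilde P$ in the SPA case, and the stability condition \eqref{stab} for sufficiently large $\gamma$, which is inherited by the $B$-subsystem Gramian equation \eqref{newgram2}) genuinely match those required by Theorems \ref{special_error_bound} and \ref{special_error_bound2}, which they do by construction.
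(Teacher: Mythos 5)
Your proposal is correct and follows exactly the same route as the paper's own proof: split $y-\tilde y$ via the decomposition $y=y_{x_0}+y_B$, apply the triangle inequality in $L^2$, and invoke Theorems \ref{special_error_bound} and \ref{special_error_bound2} whose hypotheses are imported verbatim into the statement. Nothing is missing.
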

\begin{proof}
 Let us recall that $y$ can be written as $y_{x_0}+y_B$, where $y_{x_0}$ is the output of \eqref{bilinear_x0} and $y_B$ the one of \eqref{bilinear_B}. Consequently, we have
 \begin{align*}
  \left\|y - \tilde y \right\|_{L^2}=\left\|y_{x_0}+y_B - (\tilde y_{x_0}+\tilde y_{B}) \right\|_{L^2}\leq \left\|y_{x_0}- \tilde y_{x_0} \right\|_{L^2} +\left\|y_B - \tilde y_{B} \right\|_{L^2}
 \end{align*}
using the triangle inequality. The claim now follows by Theorems \ref{special_error_bound} and \ref{special_error_bound2}.
\end{proof}
Theorem \ref{main_result} indicates that one finds a good approximation $\tilde y$ for the output $y$ of the large-scale system \eqref{fullsys} with non-zero initial states if each individual subsystem of \eqref{fullsys} is reduced such that the associated truncated HSVs are small. Depending on the decay, the number of truncated HSVs can differ in each subsystem leading to reduced order dimensions $r_{x_0}\neq r_B$.  The result of Theorem \ref{main_result} is the generalization of the error bound for the linear case proved in \citep[Theorem 3.2]{inhom_lin} if BT is applied to both subsystems \eqref{bilinear_x0} and \eqref{bilinear_B}. The result for the case of SPA as well as the combination of BT and SPA is new even for linear systems.\smallskip

The representation of the error bound nicely shows the relation between the error and the truncated HSVs which makes these values a good a-priori criterion for the choice of the reduced order dimensions. However, the first part of the bound is not suitable for practical computation as $\mathcal W_{x_0}$ depends on the full balanced realization \eqref{partmatricesx0} which is not computed in practice. Instead one can use the general representation $\mathcal E= \trace(CP_0 C^\top) +   \trace(\tilde C_{x_0} \tilde P \tilde C_{x_0}^\top) - 2 \trace(C \widehat P\tilde C_{x_0}^\top)$ from which $\trace(\Theta_2 \mathcal W_{x_0})$ was derived at the beginning of the proof of Theorem \ref{special_error_bound}. $\mathcal E$ is easily available since it involves the Gramian $P_0$ (which needs to be computed anyway to derive the reduced system) as well as the reduced Gramian $\tilde P$ and the solution  $\widehat P$ of \eqref{eq_mixed_gram} (both computationally cheap). Of course $\mathcal E$ then is an a-posteriori bound but still very powerful in order to determine an estimate for the reduction quality. The representation $\mathcal E$ provides another strategy in reducing \eqref{bilinear_x0} since $\sqrt{\mathcal E}$ is the $\mathcal H_2$-distance between systems \eqref{bilinear_B} and \eqref{rom2}, where $(B, \tilde B)$ are replaced by $(X_0, \tilde X_0)$, see \citep{morZhaL02}. Necessary conditions for local minimality have been provided in \citep{morZhaL02} and a method called bilinear iterative rational  Krylov algorithm (BIRKA) was proposed in \citep{breiten_benner} satisfying these conditions. Therefore, one can also use BIRKA instead of a balancing related scheme in order to keep the first summand of the bound in Theorem \ref{main_result} small.\smallskip

The second part of the bound in Theorem \ref{main_result} can be calculated once the Gramian $P_B$, satisfying the linear matrix inequality (LMI) \eqref{newgram2}, is computed. At the moment, LMI solver can only solve problems in moderate high dimensions, which might not be sufficient for some practical applications. However, once efficient computational methods are available, considering a Gramian like $P_B$ is very useful due to the a-priori $L^2$-error bound. In fact, only an $L^2$-bound is compatible with the bound in Theorem \ref{special_error_bound}. One might also think of choosing a Gramian like $P_0$ satisfying \eqref{reach_gram_bil_x0} for subsystem \eqref{bilinear_B} as proposed in \citep{typeIBT}. However, an $L^2$-error bound does not exist in such a case indicating the relevance of the new approach of choosing some LMI solution as a Gramian.
\begin{remark}\label{remark:gamma}
The value $\gamma>0$ was introduced in order to ensure \eqref{stab} which is a condition ensuring the existence of the Gramians. On the other hand, $\gamma$ can also be seen as an optimization parameter since the ROMs, and the HSVs depend on $\gamma$. This value was chosen equally in both subsystems, as one can see in Theorem \ref{main_result} but certainly they can also be different if this leads to a better reduction quality.
\end{remark}

We briefly summarize  this section by scheming the proposed MOR methods in Algorithm \ref{algo:SumUp}.

\begin{algorithm}[tb]
	\caption{MOR procedure for bilinear systems having non-zero initial conditions.}\label{algo:SumUp}
	\begin{algorithmic}[1]
		\Statex \textbf{Input:} System \eqref{fullsys} with matrices matrices $A$, $B$,  $N_k$, $X_0$ and $C$.
		\Statex \textbf{Output:} Two reduced systems of the form \eqref{rom1} and \eqref{rom2} such that $\tilde y_{x_0} + \tilde y_B$ approximates the original output $y$ of \eqref{fullsys}.
		\State Split original system \eqref{fullsys} into the subsystems \eqref{bilinear_x0} and \eqref{bilinear_B}.
		\State Compute  Gramians from  \eqref{reach_gram_bil_x0} and \eqref{obs_gram_bil_x0} corresponding to \eqref{bilinear_x0}. Determine associated Hankel singular values $\theta_i$ and decide for reduced order $r_{x_0}$ with $\theta_{r_{x_0}+1}, \dots, \theta_n$ small ensuring a low error according to Theorem \ref{special_error_bound}. 
		\State Compute Gramians  in \eqref{newgram2} and \eqref{obs_gram_bil_x0} corresponding to \eqref{bilinear_B}. Determine associated Hankel singular values $\sigma_i$ and decide for reduced order $r_B$  with $\sigma_{r_{B}+1}, \dots, \sigma_n$ small ensuring a low error according to Theorem \ref{special_error_bound2}. 
		\State Approximation of \eqref{bilinear_x0}:  Apply \texttt{BT} or \texttt{SPA} using the Gramians in  \eqref{reach_gram_bil_x0} and \eqref{obs_gram_bil_x0} to obtain a reduced order model of the form \eqref{rom1} (see Section \ref{MOR_x0}).
		\State Approximation of \eqref{bilinear_B}:  Apply \texttt{BT} or \texttt{SPA} using the Gramians in  \eqref{newgram2} and \eqref{obs_gram_bil_x0} to obtain a reduced order model of the form \eqref{rom2} (Section \ref{MOR_B}).
		\State (Optional) Compute $L^2$-error bound via Theorem \ref{main_result} to measure approximation quality.
	\end{algorithmic}
\end{algorithm}

\section{Numerical results}\label{sec:NumRes}
In this section, we conduct some numerical experiments illustrating the efficiency of the proposed MOR schemes.  All the simulations are done on a CPU 2,2 GHz  \intel~Core\texttrademark i7, 16 GB 2400 MHz DDR4, \matlab~9.1.0.441655 (R2016b).

We consider a standard test case model representing a  $2$D boundary controlled heat transfer system as described in \citep{bennerdamm}. The model is governed by the following boundary  value problem
\begin{align*}
	\begin{array}{rll}
		\partial_{t}\,X(t,\xi) &= \Delta X(t,\xi), & \text{if} \quad  \xi\in (0,1)\times(0,1),\quad \text{and} \quad t>0,    \\
		X(t,\xi) &= u(t), &  \text{if} \quad  \xi\in \Gamma_{1}, \\
		n\cdot \nabla X(t,\xi) &= u(t)X(t,\xi)  , & \text{if} \quad  \xi \in \Gamma_{3},\\
		X(t,\xi) &= 0, & \text{if} \quad  \xi \in \Gamma_{2}\cup\Gamma_{4}, \\
	\end{array}
\end{align*}
where $\Gamma_1 = \{0\}\times (0,1)$, $\Gamma_2 = (0,1)\times\{0\}$, $\Gamma_3 = \{1\}\times (0,1)$ and $\Gamma_4 = (0,1)\times\{1\}$. Here the heat transfer term $u$ acting on $\Gamma_1$ and $\Gamma_3$ is the input variable. Moreover, we assume that the initial temperature is positive and constant in space, i.e.,
\begin{align*}
X(0,\xi) = 0.1,  &  \quad  \xi\in (0,1)\times(0,1).
\end{align*} 
A semi-discretization in space using finite differences with $k = 10$ equidistant grid points in each direction leads to a bilinear  system of dimension $n = k^2 = 100$ having the form 
\begin{subequations}
\begin{align}\label{eq:NumExample}
	\dot{x}(t) &= Ax(t) + Nx(t)u(t) + Bu(t),\quad x(0) = X_0v_0, \\
	y(t) &= Cx(t),
\end{align}
\end{subequations}
where $X_0 = \begin{bmatrix}
	1 & \dots & 1
\end{bmatrix}^{\top}$, $v_0 = 0.1$ and $C = \frac{1}{n}\begin{bmatrix}
	1 & \dots & 1
\end{bmatrix}$ (see \citep{bennerdamm} for more details). 

 Firstly, in order to apply the proposed techniques, one need to compute $P_{0}$, $Q$ and $P_B$  satisfying equations \eqref{reach_gram_bil_x0}, \eqref{obs_gram_bil_x0} and \eqref{newgram2}, respectively. As shown in \citep{dammbennernewansatz}, by applying the Schur complement condition,  \eqref{newgram2} can be equivalently written as  the following linear matrix inequality
 \begin{equation}\label{lmi_formulation}
\begin{bmatrix}
AP_B +P_{B}A^\top +BB^\top & P_{B}N^\top 
\\
NP_{B} &    -P_{B}
\end{bmatrix} \leq 0.
 \end{equation}
Hence, we use the YALMIP toolbox \citep{lofberg2004yalmip} to the cost function $\trace(P_{B})$ subject to \eqref{lmi_formulation} in order to find a good candidate for the Gramian. 

Then, we compute the Hankel singular values associated to subsystems \eqref{bilinear_x0} and \eqref{bilinear_B} using, respectively, the pair of Gramians $(P_{x_0},Q)$ and $(P_{B},Q)$. The resulting Hankel singular values are depicted in Figure \ref{fig:HSV}.  We notice a fast decay of these  values, and hence, we expect accurate reduced models already for small reduced order dimensions as a consequence of Theorems \ref{special_error_bound} and \ref{special_error_bound2}.    
\begin{figure}[h]
	\newlength\wex
	\newlength\hex
	\setlength{\wex}{.8\textwidth}
	\setlength{\hex}{0.4\textwidth}
	\centering
	\includegraphics{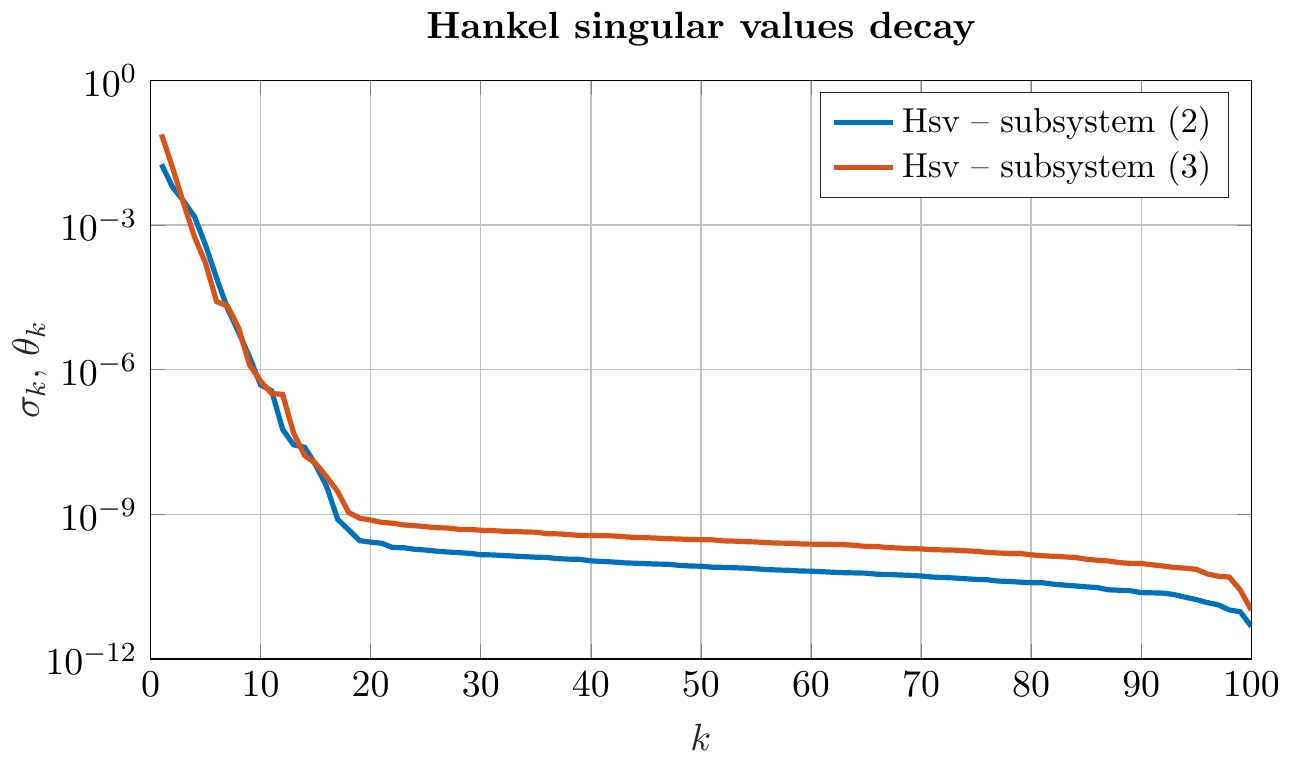}
	\caption{Decay of Hankel singular values for the subsystems \eqref{bilinear_x0} and \eqref{bilinear_B}. }\label{fig:HSV}
\end{figure}

For subsystem \eqref{bilinear_x0}, we construct ROMs of orders $5$ and $10$ using balanced truncation (referred here as \texttt{BT}) and SPA  (referred here as \texttt{SPA}) based on the Gramians $P_0$ and $Q$. Similarly, for subsystem \eqref{bilinear_B}, we construct  ROMs of order $5$ and $10$ using balanced truncation (referred here as \texttt{BT\_2}) and SPA (referred here as \texttt{SPA\_2}) based on the Gramians $P_B$ and $Q$. In order to compare the quality of ROMs, we simulate the original system and the reduced models using the input $u(t) = e^{-t}\cos(0.5t)$, $t\in [0, 1]$.  In Figure~\ref{fig:Err_x0}, the pointwise absolute errors for \texttt{BT} and \texttt{SPA} are depicted in semi-log scale, i.e., the function  $\epsilon(t) = |y(t) - \tilde{y}(t)|$ is computed, where $y$ and $\tilde{y}$ are, receptively, the original and reduced outputs. One can observe that the results improve once the reduced order is increased.  Both methodologies produce ROMs with a similar accuracy.  Similarly, in Figure~\ref{fig:Err_u} the pointwise absolute error plots for \texttt{BT\_2} and \texttt{SPA\_2}  are presented in semi-log scale. We notice that, for this example, \texttt{SPA\_2}  produces ROMs with a higher accuracy than \texttt{BT\_2}.  Additionally, in Tables  \ref{table:X0} and \ref{table:U}, the $L^2$-error values for the time interval $[0,1]$  together with the corresponding error bounds for the different methods are shown, respectively, for  subsystems \eqref{bilinear_x0} and \eqref{bilinear_B}.
\begin{table}[!tb]
	\centering
	\begin{tabular}{|c|c|c|c|c|}\hline
		\small Method & Err. bound $r_{x_0} =5$ & $L^2$-err. $r_{x_0}=5$ & Err. bound $r_{x_0} =10$ &  $L^2$-err. $r_{x_0}=10$  \\ \hline
		\texttt{\texttt{BT}} &      $7.64 \cdot 10^{-5}$      &  $2.80 \cdot 10^{-5}$            & $1.32\cdot 10^{-6}$ &      $6.76\cdot 10^{-7}$            \\ \hline
		\texttt{\texttt{SPA}}&   $1.53\cdot 10^{-4}$       & $5.33 \cdot 10^{-5}$             &$2.57\cdot 10^{-6}$ &   $1.77\cdot 10^{-6}$  \\ \hline            
	\end{tabular}
	\caption{$L^2$-errors for subsystem \eqref{bilinear_x0}: Error bounds in Theorem \ref{special_error_bound} and real errors for \texttt{BT} and \texttt{SPA} for the simulation presented in Figure \ref{fig:Err_x0}.}\label{table:X0}
\end{table}

\begin{table}[!tb]
	\centering
	\begin{tabular}{|c|c|c|c|c|}\hline
		\small Method & Err. bound $r_B =5$ & $L^2$-err. $r_B=5$ & Err. bound $r_B =10$ &  $L^2$-err. $r_B=10$  \\ \hline
		\texttt{\texttt{BT\_2}} &      $1.69\cdot 10^{-4}$      &   $3.25\cdot 10^{-5}$          & $7.88\cdot 10^{-7}$ &  $5.31\cdot 10^{-8} $              
		\\ \hline
		\texttt{\texttt{SPA\_2}}&  $1.69\cdot 10^{-4}$          &   $3.42\cdot 10^{-6}$          & $7.88\cdot 10^{-7}$ &     $4.00\cdot 10^{-9}$
		\\ \hline            
	\end{tabular}
	\caption{$L^2$-errors for subsystem \eqref{bilinear_B}: Error bound in Theorem \ref{special_error_bound2} and real errors for \texttt{BT\_2} and \texttt{SPA\_2} for the simulation presented in Figure \ref{fig:Err_u}.}\label{table:U}
\end{table}

\begin{figure}[h]
	\setlength{\wex}{.8\textwidth}
	\setlength{\hex}{0.4\textwidth}
	\centering
	\includegraphics{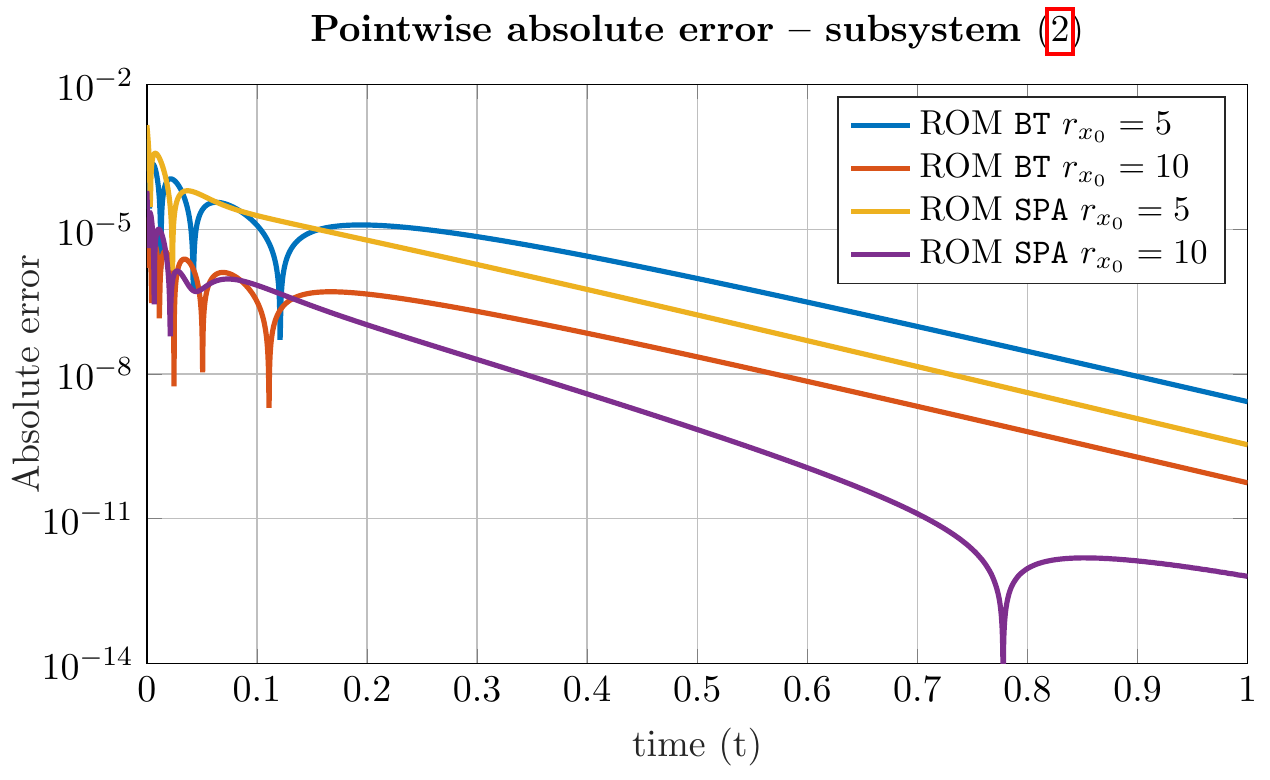}
	\caption{The pointwise absolute error for the approximations of subsystem \eqref{bilinear_x0} with input $u(t) = e^{-t}\cos(0.5t)$.
	}\label{fig:Err_x0}
\end{figure}

\begin{figure}[h]
	\setlength{\wex}{.8\textwidth}
	\setlength{\hex}{0.4\textwidth}
	\centering
	\includegraphics{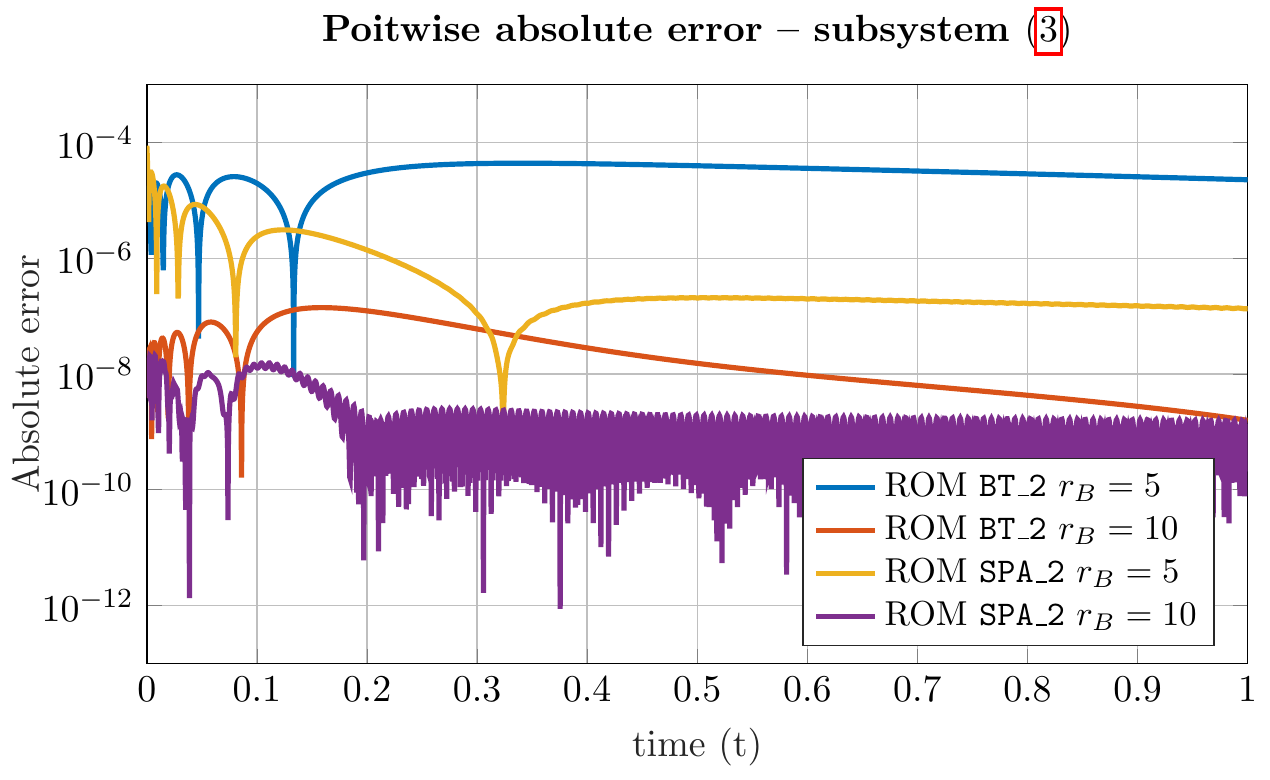}
	\caption{The pointwise absolute error for the approximations of subsystem \eqref{bilinear_B} with input $u(t) = e^{-t}\cos(0.5t)$.
	}\label{fig:Err_u}
\end{figure}

 In Figure \ref{fig:Err_vs_r_x0}, we depict the decay of  normalized $L^2$-error bounds from Theorem \ref{special_error_bound} together with the real $L^2$-errors produced by the time domain simulations for the subsystem \eqref{bilinear_x0} using the methods \texttt{BT} and \texttt{SPA}. Normalized here means to be divided by the $L^2$-norm of the original system output, e.g., the normalized $L^2$-error for a given order is  $\|y - \tilde{y}\|_{L^2}/\|y\|_{L^2}$, where $\tilde{y}$ is the reduced output. For this example, \texttt{BT} is performing slightly better than \texttt{SPA}.  Similarly, in Figure \ref{fig:Err_vs_r_u} we depict the decay of the normalized $L^2$-error bound from Theorem \ref{special_error_bound2} together with the normalized $L^2$-errors produced by the time domain simulations for the subsystem \eqref{bilinear_B} using the methods \texttt{BT\_2} and \texttt{SPA\_2}.  As stated before, \texttt{SPA\_2} produces ROMs that are more accurate than \texttt{BT\_2}.  

\begin{figure}[h]
	\setlength{\wex}{.8\textwidth}
	\setlength{\hex}{0.4\textwidth}
	\centering
	\includegraphics{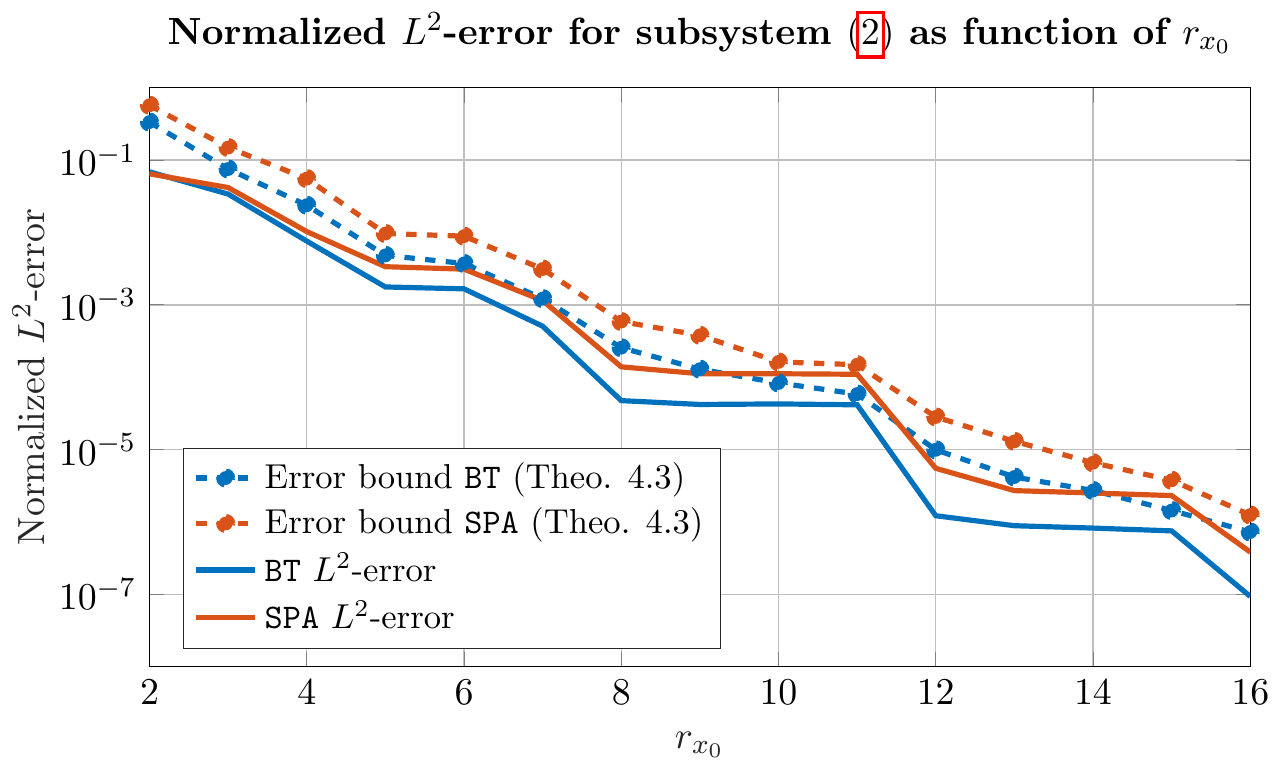}
	\caption{Subsystem \eqref{bilinear_x0}: Decay of normalized error bound ($\text{Err. Bound}/\|y_{x_0}\|_{L^2}$) and normalized $L^2$-error ($\|y_{x_0}-\tilde{y}_{x_0}\|_{L^2}/\|y_{x_0}\|_{L^2}$) produced by \texttt{SPA} and \texttt{BT} for different orders $r_{x_0} = 1, \dots, 16$.}\label{fig:Err_vs_r_x0}.
\end{figure}

\begin{figure}[h]
	\setlength{\wex}{.8\textwidth}
	\setlength{\hex}{0.4\textwidth}
	\centering
	\includegraphics{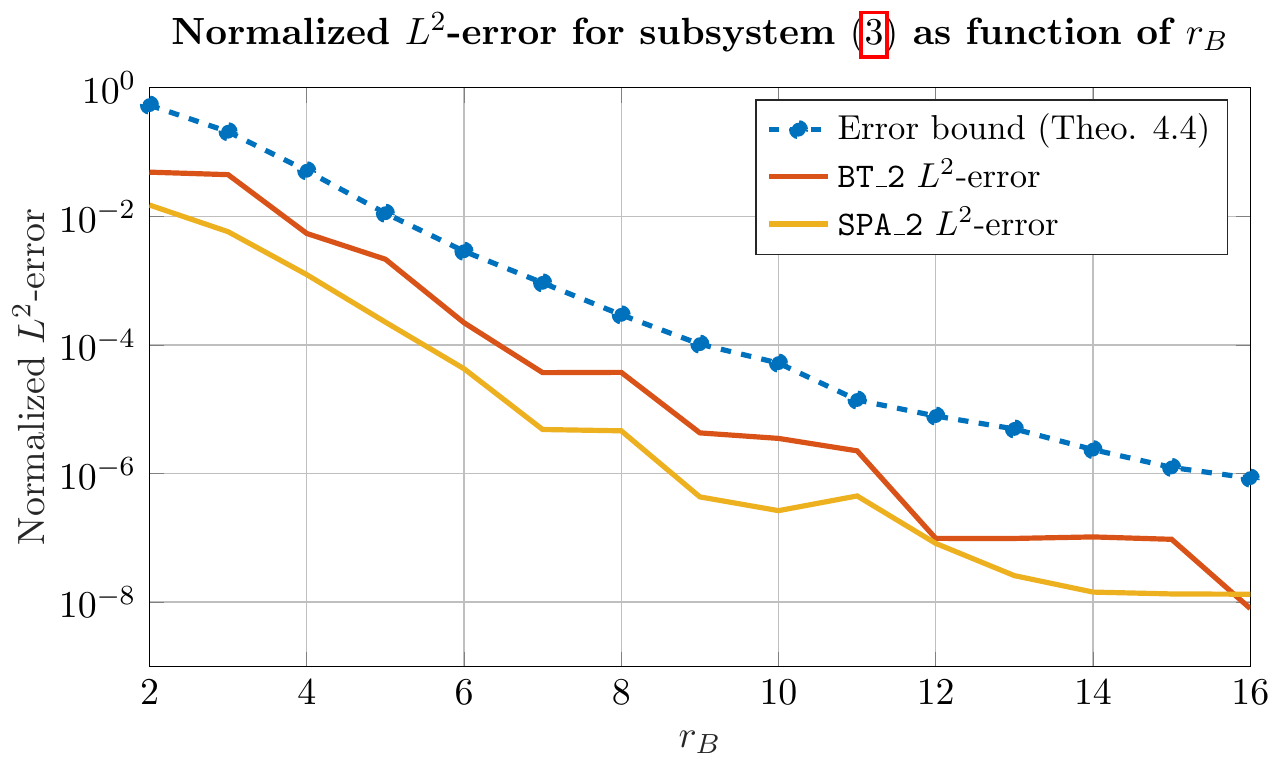}
	\caption{Subsystem \eqref{bilinear_B}: Decay of the normalized error bound ($\text{Err. Bound}/\|y_{B}\|_{L^2}$) and normalized $L^2$-error ($\|y_{B}-\tilde{y}_{B}\|_{L^2}/\|y_{B}\|_{L^2}$) produced by \texttt{SPA\_2} and \texttt{BT\_2} for different orders $r_B = 1, \dots, 16$. }\label{fig:Err_vs_r_u}.
\end{figure}

Finally, we illustrate the statement of Remark \ref{remark:gamma} and show the influence of the parameter  $\gamma$ on the approximation performance by means of Figure \ref{fig:Err_vs_gamma}. This figure depicts the error bound from Theorem \ref{special_error_bound} and the real $L^2$-error as a function of $\gamma$ (see Equations \eqref{reach_gram_bil_x0} and \eqref{obs_gram_bil_x0}) for a reduced dimension $r_{x_0} =10$. Furthermore, the simulations are conducted using the input $u(t) = e^{-t}\cos(0.5t)$ on the interval $[0,1]$.  We see that different values of $\gamma$ can lead to different qualities of the reduced models. Moreover, we observe numerically that for larger values $\gamma$, in this example $\gamma >6$, the real $L^2$-  error is increasing again. The choice of the optimal $\gamma$ is still an open problem, but not considered in this work. The numerical example suggests that one possible strategy is to select the $\gamma$ that minimizes the error bound of Theorem \ref{special_error_bound}.

\begin{figure}[h]
	\setlength{\wex}{.8\textwidth}
	\setlength{\hex}{0.4\textwidth}
	\centering
	\includegraphics{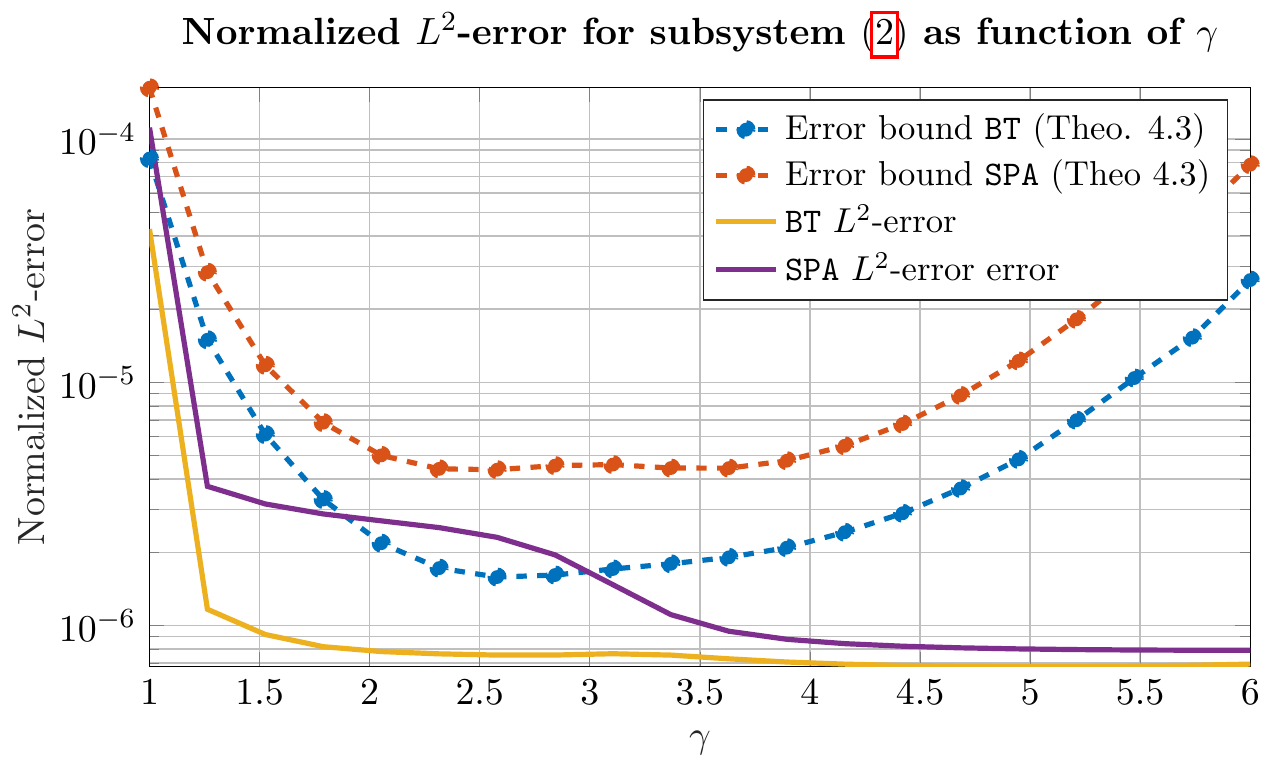}
	\caption{Subsystem \eqref{bilinear_x0}:  Normalized error bound ($\text{Err. Bound}/\|y_{x_0}\|_{L^2}$) and normalized $L^2$-error ($\|y_{x_0}-\tilde{y}_{x_0}\|_{L^2}/\|y_{x_0}\|_{L^2}$) produced by \texttt{SPA} and \texttt{BT} for different values $\gamma\in[1,6]$ and reduced dimension $r_{x_0} = 10$. }\label{fig:Err_vs_gamma}.
\end{figure}

%%%%%%%

\appendix

\section{Proofs of Section \ref{section2}}\label{appendix_section2}

\subsection{Proof of Lemma \ref{fund_est}}
\begin{proof}[Proof of Lemma \ref{fund_est}]
 We factorize $K= F F^\top$. 
Let $f_i$ be the $i$th column of the matrix $F$ and $x_{f_i}(\cdot, s)$ denote the solution to \eqref{bilinear_x0_state} with initial state $f_i$ and initial time $s$. Then, we have \begin{align*}
  \Phi(t, s) F  
  = [x_{f_1}(t, s), x_{f_2}(t, s), \ldots, x_{f_d}(t, s)],                                                                                \end{align*}
where $d$ is the number of columns of $F$. Using the scaling $\gamma>0$, $x_{f_i}(\cdot, s)$ can be interpreted as the solution to $\dot x_{f_i}(t) = Ax_{f_i}(t) +  \sum_{k=1}^m \frac{1}{\gamma} N_k x_{f_i}(t) \gamma u_k(t)$. Applying the results of \citep[Section 2]{h2_bil} on a bound for $x_{f_i}(t, s)x_{f_i}^\top(t, s)$, we obtain
  \begin{align*}
\Phi(t, s) K \Phi^\top(t, s) &= \sum_{k=1}^m x_{f_i}(t, s)x_{f_i}^\top(t, s) \leq  \exp\left\{\int_s^t \left\|\gamma u^{0}(v)\right\|_2^2 dv\right\} \sum_{k=1}^m Z_\gamma(t-s, f_i f_i^\top)\\&=\exp\left\{\int_s^t \left\|\gamma u^{0}(v)\right\|_2^2 dv\right\} Z_\gamma(t-s, K),
\end{align*}
where the second argument in $Z_\gamma$ denotes the respective initial condition. This concludes the proof.
\end{proof}

\subsection{Proof of Theorem \ref{cond_stab}}

Before we begin with the proof, we want to emphasize that it heavily makes use of relations between the bilinear equation \eqref{biode} and the stochastic system that results from setting $u_k = \frac{dw_k}{dt}$ in the bilinear term, where $\frac{dw_k}{dt}$ is white noise. Exploiting this link spares us from conducting relatively long and technical proofs. In particular, the stability analysis of bilinear and stochastic system is based on the same type of Lyapunov equations \citep{damm, redmannspa2}. Therefore, the first part of the Theorem \ref{cond_stab} follows from the result in \citep{redbendamm}. Moreover, the state of a bilinear system can be bounded by the state of the associated stochastic differential equation \citep{h2_bil}. This allows to transfer the result of \citep{martin_igor}, where the existence of a matrix $V$ as in Theorem \ref{cond_stab} was proved for stochastic systems. It turns out that the same holds true in the bilinear case using the same $V$ as in \citep{martin_igor}.
\begin{proof}[Proof of Theorem \ref{cond_stab}]
Condition \eqref{extended_stab} implies \eqref{bounded_Z} by \citep[Corollary 3.2]{redbendamm} or \citep[Lemma 6.12]{redmannPhD}. Now, we can use a stochastic representation for $Z_\gamma(\cdot, X_0 X_0^\top)$, see, e.g., \citep{damm, redmannspa2} which is $Z_\gamma(t, X_0 X_0^\top)= \mathbb E \left[\Phi_w(t)X_0 X_0^\top \Phi_w^\top(t)\right]$. Here, the stochastic fundamental solution $\Phi_w$ satisfies $\Phi_w(t) = I +\int_0^t A \Phi_w(s) ds + \sum_{k=1}^m \int_0^t N_k \Phi_w(s) dw_k(s)$ by definition, where $w_1, \ldots, w_m$ are independent standard Brownian motions. Based on \citep[Theorem 4.4, Remark 1]{martin_igor}, we then find \begin{align*}
  \left\|Z_\gamma(t, X_0 X_0^\top)\right\|_2\leq \mathbb E\left\|\Phi_w(t)X_0 X_0^\top \Phi_w^\top(t)\right\|_2\leq \mathbb E\left\|\Phi_w(t)X_0 \right\|_F^2  \lesssim \expn^{-c t}.                                                                                                                                                                                                                                                          
                                                                                                                                                                                                                                                                                                                                                                                                                                                                                               \end{align*}
Let us finally consider 
\begin{align*}
 \left\|\Phi(t)X_0 - V \tilde{\Phi}(t)\tilde X_0\right\|_F^2 &=\left\|\smat I & -V\srix\smat \Phi(t)& 0\\0 &\tilde{\Phi}(t)\srix \smat X_0\\ \tilde X_0\srix\right\|_F^2 \\&=\trace \left(\smat I & -V\srix\smat \Phi(t)& 0\\0 &\tilde{\Phi}(t)\srix \smat X_0\\ \tilde X_0\srix\smat X_0^\top & \tilde X_0^\top\srix \smat \Phi^\top(t)& 0\\0 &\tilde{\Phi}^\top(t)\srix \smat I \\ -V^\top\srix\right). 
\end{align*}
Since $\smat \Phi(t)& 0\\0 &\tilde{\Phi}(t)\srix $ is the fundamental solution to a bilinear system with matrices $\smat A& 0\\0 &\tilde A\srix $ and $\smat N_k& 0\\0 &\tilde N_k\srix $, we can apply Lemma \ref{fund_est} leading to \begin{align*}
 \left\|\Phi(t)X_0 - V \tilde{\Phi}(t)\tilde X_0\right\|_F^2 \leq \trace \left(\smat I & -V\srix Z_\gamma^e(t) \smat I \\ -V^\top\srix\right) \exp\left\{\int_0^t \left\|\gamma u^{0}(s)\right\|_2^2 ds\right\},
 \end{align*}
 where $Z_\gamma^e$ is the matrix function solving \eqref{eqZ} with coefficients $\smat A& 0\\0 &\tilde A\srix $,  $\smat N_k& 0\\0 &\tilde N_k\srix $ and $K=\smat X_0\\ \tilde X_0\srix\smat X_0^\top & \tilde X_0^\top\srix$. We exploit the associated stochastic representation which is $Z^e(t) = \mathbb E\left(\smat \Phi_w(t)& 0\\0 &\tilde{\Phi}_w(t)\srix \smat X_0\\ \tilde X_0\srix\smat X_0^\top & \tilde X_0^\top\srix\smat \Phi^\top_w(t)& 0\\0 &\tilde{\Phi}^\top_w(t)\srix\right) $, where $\tilde{\Phi}_w$ is the reduced order stochastic fundamental solution involving the matrices $\tilde A$ and $\tilde N_k$. Consequently, based on the linearity of the trace and the definition of the Frobenius norm, we have \begin{align*}
 \left\|\Phi(t)X_0 - V \tilde{\Phi}(t)\tilde X_0\right\|_F^2 \leq \mathbb E\left\|\Phi_w(t)X_0 - V \tilde{\Phi}_w(t)\tilde X_0\right\|_F^2 \exp\left\{\int_0^t \left\|\gamma u^{0}(s)\right\|_2^2 ds\right\}.
 \end{align*}
Due to \citep[Corollary 4.5, Remark 1]{martin_igor} we know about the existence of $V$ with $V^\top V=I$ such that $\Phi_w(t)X_0 = V \tilde{\Phi}_w(t)\tilde X_0$, where $\tilde{\Phi}_w$ decays exponentially in the mean square sense. This decay of $\tilde{\Phi}_w$ is equivalent to \eqref{stab_min_rel}, see, e.g., \citep{damm} which concludes the proof.
\end{proof}

\section{Proofs of Section \ref{MOR_x0}}\label{appendix_section4}

\subsection{Proof of Theorem \ref{red_gram_ex}}
\begin{proof}[Proof of Theorem \ref{red_gram_ex}]
 Since the Gramians of a balanced system are identical and equal to the diagonal matrix $\Theta$, we have \begin{align}\nonumber
&\smat{\mathcal A}_{11}&{\mathcal A}_{12}\\ 
{\mathcal A}_{21}&{\mathcal A}_{22}\srix \smat{\Theta}_{1}& \\ 
 &{\Theta}_{2}\srix+ \smat{\Theta}_{1}& \\ 
 &{\Theta}_{2}\srix \smat{\mathcal A}_{11}^\top&{\mathcal A}_{21}^\top\\ 
{\mathcal A}_{12}^\top&{\mathcal A}_{22}^\top\srix +\frac{1}{\gamma^2}\sum_{k=1}^{m}  \smat {\mathcal N}_{k, 11}&{\mathcal N}_{k, 12}\\ 
{\mathcal N}_{k, 21}&{\mathcal N}_{k, 22}\srix \smat{\Theta}_{1}& \\ 
 &{\Theta}_{2}\srix \smat {\mathcal N}_{k, 11}^\top&{\mathcal N}_{k, 21}^\top\\ 
{\mathcal N}_{k, 12}^\top&{\mathcal N}_{k, 22}^\top\srix \\ \label{bal_equation_x02}
&= -\smat X_{0, 1} \\ X_{0, 2}\srix \smat X_{0, 1}^\top &X_{0, 2}^\top\srix,\\ \nonumber
&\smat{\mathcal A}_{11}^\top&{\mathcal A}_{21}^\top\\ 
{\mathcal A}_{12}^\top&{\mathcal A}_{22}^\top\srix \smat{\Theta}_{1}& \\ 
 &{\Theta}_{2}\srix+ \smat{\Theta}_{1}& \\ 
 &{\Theta}_{2}\srix \smat{\mathcal A}_{11}&{\mathcal A}_{12}\\ 
{\mathcal A}_{21}&{\mathcal A}_{22}\srix +\frac{1}{\gamma^2}\sum_{k=1}^{m}  \smat {\mathcal N}_{k, 11}^\top&{\mathcal N}_{k, 21}^\top\\ 
{\mathcal N}_{k, 12}^\top&{\mathcal N}_{k, 22}^\top\srix \smat{\Theta}_{1}& \\ 
 &{\Theta}_{2}\srix \smat {\mathcal N}_{k, 11}&{\mathcal N}_{k, 12}\\ 
{\mathcal N}_{k, 21}&{\mathcal N}_{k, 22}\srix \\ \label{bal_equation_x0}
&= -\smat{\mathcal C}_1^\top \\{\mathcal C}_2^\top\srix \smat{\mathcal C}_1 &{\mathcal C}_2\srix.
    \end{align}  
The left upper blocks of these equations yield \begin{align}\label{romgrambt}
{\mathcal A}_{11}   \Theta_{1} + \Theta_{1}{\mathcal A}_{11}^\top + \frac{1}{\gamma^2}\sum_{k=1}^{m} {\mathcal N}_{k, 11}   \Theta_{1}  {\mathcal N}_{k, 11}^\top &\leq -  X_{0, 1} X_{0, 1}^\top,\\      
{\mathcal A}_{11}^\top   \Theta_{1} + \Theta_{1}{\mathcal A}_{11} + \frac{1}{\gamma^2}\sum_{k=1}^{m} {\mathcal N}_{k, 11}^\top   \Theta_{1}  {\mathcal N}_{k, 11} &\leq -  {\mathcal C}_1^\top {\mathcal C}_1.\nonumber
        \end{align}
Therefore, $\tilde Z_\gamma$ and $\tilde Z_\gamma^*$ decay exponentially by Theorem \ref{cond_stab} if BT is considered. Consequently, the integrals $\tilde P$ and $\tilde Q$ exist. We now multiply \eqref{bal_equation_x0} by \begin{align}\label{repinva}
\begin{bmatrix}
 {\mathcal A}_{11}&{\mathcal A}_{12}\\ 
{\mathcal A}_{21}&{\mathcal A}_{22}
\end{bmatrix}^{-1}
=\begin{bmatrix}
\bar {\mathcal A}^{-1}& \star\ {}\\
-{\mathcal A}_{22}^{-1}{\mathcal A}_{21}\bar {\mathcal A}^{-1}& \star\ {}
\end{bmatrix}
                  \end{align} 
from the right and with its transposed from the left. Evaluating the left upper block of the resulting equation and multiplying it with $\bar {\mathcal A}$ from the right and its transposed from the left, we find    \begin{align}\label{eq_red_spa}
\bar {\mathcal A}^\top   \Theta_{1} + \Theta_{1}\bar {\mathcal A} + \frac{1}{\gamma^2}\sum_{k=1}^{m} \bar {\mathcal N}_{k}^\top   \Theta_{1}  \bar{\mathcal N}_{k} = -  \bar {\mathcal C}^\top \bar {\mathcal C} - \frac{1}{\gamma^2}\sum_{k=1}^{m}\bar{\mathcal N}_{k, 21}^\top \Theta_2\bar{\mathcal N}_{k, 21}\leq -  \bar {\mathcal C}^\top \bar {\mathcal C},
        \end{align}
where $\bar{\mathcal N}_{k, 21}:={\mathcal N}_{k, 21}-{\mathcal N}_{k, 22}{\mathcal A}_{22}^{-1}{\mathcal A}_{21}$                    
providing the existence of $\tilde Q$ for SPA using Theorem \ref{cond_stab}.
\end{proof}

\subsection{Proof of Lemma \ref{basis_bound}}
\begin{proof}[Proof of Lemma \ref{basis_bound}]
 By Lemma \ref{lem_sol_rep}, we have that $y_{x_0}(t)= C \Phi(t) X_0 v_0$ and $y_{x_0}(t)= \tilde C_{x_0} \tilde \Phi(t) \tilde X_0 v_0$, where $\Phi$ and $\tilde \Phi$ are the fundamental solutions to the original and the reduced system, respectively, introduced in Definition \ref{defn_fund}. Consequently, we obtain \begin{align*}
 \left\|y_{x_0}(t) - \tilde y_{x_0}(t) \right\|_2^2 &\leq 
 \left\|C\Phi(t)X_0 - \tilde C_{x_0} \tilde{\Phi}(t)\tilde X_0\right\|_F^2 \left\|v_0\right\|_2^2 =\left\|\smat C & -\tilde C_{x_0}\srix\smat \Phi(t)& 0\\0 &\tilde{\Phi}(t)\srix \smat X_0\\ \tilde X_0\srix\right\|_F^2 \left\|v_0\right\|_2^2 \\
 &=\trace \left(\smat C & -\tilde C_{x_0}\srix\smat \Phi(t)& 0\\0 &\tilde{\Phi}(t)\srix \smat X_0\\ \tilde X_0\srix\smat X_0^\top & \tilde X_0^\top\srix \smat \Phi^\top(t)& 0\\0 &\tilde{\Phi}^\top(t)\srix \smat C^\top \\ -C_{x_0}^\top\srix\right)\left\|v_0\right\|_2^2. 
\end{align*}
Now, $\smat \Phi(t)& 0\\0 &\tilde{\Phi}(t)\srix$ is the fundamental solution to the bilinear system with matrices $\smat A& 0\\0 &\tilde A_{x_0}\srix $ and $\smat N_k& 0\\0 &\tilde N_{x_0, k}\srix$. Therefore, the result follows by Lemma \ref{fund_est} setting $K=\smat X_0\\ \tilde X_0\srix\smat X_0^\top & \tilde X_0^\top\srix$. 
\end{proof}

\subsection{Proof of Theorem \ref{special_error_bound}}
\begin{proof}[Proof of Theorem \ref{special_error_bound}]
 We integrate the result of Lemma \ref{basis_bound} on $[0, \infty)$ and obtain \begin{align*}
 \left\|y_{x_0} - \tilde y_{x_0} \right\|_{L^2}^2 \leq   \mathcal E \exp\left\{\left\|\gamma u^{0}\right\|_{L^2}^2\right\} \left\|v_0\right\|_2^2,                                                                                                                                   \end{align*}
where $\mathcal E:=\trace\left(\smat C & -\tilde C_{x_0}\srix \int_0^\infty Z_\gamma^e(t)dt \smat C^\top \\ -\tilde C_{x_0}^\top\srix\right)$. The left upper and the right lower block of $\int_0^\infty Z_\gamma^e(t)dt$ are $P_0$ and $\tilde P$, respectively. Both Gramians exist by assumption and Theorem \ref{red_gram_ex}. This also implies the existence of the right upper block of $\int_0^\infty Z_\gamma^e(t)dt$ which we denote by $\widehat P$. It satisfies \begin{align}  \label{eq_mixed_gram}                                                                                                                                                                                                                                                              
A \widehat P + \widehat P  \tilde A_{x_0}^\top + \frac{1}{\gamma^2}\sum_{k=1}^{m}  N_{k} \widehat P  \tilde N_{x_0, k}^\top   = -   X_0 X_{0, 1}^\top.                                                                                                                                                                                                                       \end{align}
Let $\mathcal S$ be the matrix ensuring \eqref{balanced_cal_S}. Since $\Theta = \mathcal S P_0 \mathcal S^\top$,  $Y= \mathcal S \widehat P$ and $\smat{\mathcal C}_1 &{\mathcal C}_2\srix = C\mathcal S^{-1} $, we have
\begin{align*}
 \mathcal E=& \trace(CP_0 C^\top) +   \trace(\tilde C_{x_0} \tilde P \tilde C_{x_0}^\top) - 2 \trace(C \widehat P\tilde C_{x_0}^\top) \\
 &= \trace(\smat{\mathcal C}_1 &{\mathcal C}_2\srix \smat{\Theta}_{1}& \\ 
 &{\Theta}_{2}\srix \smat{\mathcal C}_1^\top \\{\mathcal C}_2^\top\srix) +   \trace(\tilde C_{x_0} \tilde P_0 \tilde C_{x_0}^\top) - 2 \trace(\smat{\mathcal C}_1 &{\mathcal C}_2\srix Y \tilde C_{x_0}^\top).\end{align*}
 Comparing \eqref{bal_equation_x02} with \eqref{bal_equation_x0}, we see that $\trace(\smat{\mathcal C}_1 &{\mathcal C}_2\srix \Theta \smat{\mathcal C}_1^\top \\{\mathcal C}_2^\top\srix) = \trace(\smat{X}_{0, 1}^\top &{X}_{0, 2}^\top\srix \Theta \smat {X}_{0, 1} \\{X}_{0, 2}\srix)$. Since the same is true for the reduced Gramians, we obtain
 \begin{align}\label{firstEB_reform}
 \mathcal E= \trace(\smat{X}_{0, 1}^\top &{X}_{0, 2}^\top\srix \smat{\Theta}_{1}& \\ 
 &{\Theta}_{2}\srix \smat {X}_{0, 1} \\{X}_{0, 2}\srix) +   \trace({X}_{0, 1}^\top \tilde Q {X}_{0, 1}) - 2 \trace(\smat{\mathcal C}_1 &{\mathcal C}_2\srix Y \tilde C_{x_0}^\top),
\end{align}
where $\tilde Q$ exists due to Theorem \ref{red_gram_ex}. Now, it is needed to find an equation for $\tilde C_{x_0}^\top \smat{\mathcal C}_1 &{\mathcal C}\srix$ for both BT and SPA in order to analyze the error further.
We evaluate the first $r_{x_0}$ rows of \eqref{bal_equation_x0} to obtain an expression for the case of BT:\begin{align}\label{cceq_bt}
&-{\mathcal C}_1^\top \smat{\mathcal C}_1 &{\mathcal C}_2\srix =
\smat{\mathcal A}_{11}^\top {\Theta}_{1}&{\mathcal A}_{21}^\top{\Theta}_{2}\srix + \smat {\Theta}_{1}& 0\srix \smat{\mathcal A}_{11}&{\mathcal A}_{12}\\{\mathcal A}_{21}&{\mathcal A}_{22}\srix +\frac{1}{\gamma^2}\sum_{k=1}^{m}  \smat {\mathcal N}_{k, 11}^\top {\Theta}_{1} &{\mathcal N}_{k, 21}^\top {\Theta}_{2}\srix \smat {\mathcal N}_{k, 11}&{\mathcal N}_{k, 12}\\ 
{\mathcal N}_{k, 21}&{\mathcal N}_{k, 22}\srix.                                                                        \end{align}
For SPA we multiply \eqref{bal_equation_x0} with $\smat
 {\mathcal A}_{11}^\top&{\mathcal A}_{21}^\top\\ 
{\mathcal A}_{12}^\top&{\mathcal A}_{22}^\top
\srix^{-1}$ from the left and obtain
\begin{align*}
 -\smat {\bar {\mathcal A}}^{-\top} {\bar {\mathcal C}}^{\top} \\ \star \srix \smat{\mathcal C}_1 &{\mathcal C}_2\srix=&\smat{\Theta}_{1}& \\ 
 &{\Theta}_{2}\srix+ \smat {\bar {\mathcal A}}^{-\top}&-\bar {\mathcal A}^{-\top}({\mathcal A}_{22}^{-1}{\mathcal A}_{21})^\top\\ 
\star &\star \srix \smat{\Theta}_{1}& \\ 
 &{\Theta}_{2}\srix \smat
 {\mathcal A}_{11}&{\mathcal A}_{12}\\ 
{\mathcal A}_{21}&{\mathcal A}_{22}
\srix\\
&+\frac{1}{\gamma^2}\sum_{k=1}^{m}  \smat {\bar {\mathcal A}}^{-\top}{\bar {\mathcal N}}_{k}^\top& {\bar {\mathcal A}}^{-\top} \bar {\mathcal N}_{k, 21}^\top\\ 
\star&\star\srix \smat{\Theta}_{1}& \\ 
 &{\Theta}_{2}\srix \smat {\mathcal N}_{k, 11}&{\mathcal N}_{k, 12}\\ 
{\mathcal N}_{k, 21}&{\mathcal N}_{k, 22}\srix
    \end{align*}  
using the partition in \eqref{repinva} and setting $\bar{\mathcal N}_{k, 21}:={\mathcal N}_{k, 21}-{\mathcal N}_{k, 22}{\mathcal A}_{22}^{-1}{\mathcal A}_{21}$. Multiplying the first $r_{x_0}$ rows of the above equation by   $\bar {\mathcal A}^{\top}$ from the left results in 
\begin{align}\label{cceq_spa}
 -\bar {\mathcal C}^{\top} \smat{\mathcal C}_1 &{\mathcal C}_2\srix = \bar {\mathcal A}^{\top} {\Theta}_{1}+ \smat {\Theta_1} & -  ({\mathcal A}_{22}^{-1}{\mathcal A}_{21})^\top \Theta_2\srix \smat
 {\mathcal A}_{11}&{\mathcal A}_{12}\\ 
{\mathcal A}_{21}&{\mathcal A}_{22}
\srix +\frac{1}{\gamma^2}\sum_{k=1}^{m}  \smat {\bar {\mathcal N}}_{k}^\top {\Theta}_{1} &{\bar{\mathcal N}}_{k, 21}^\top {\Theta}_{2}\srix \smat {\mathcal N}_{k, 11}&{\mathcal N}_{k, 12}\\ 
{\mathcal N}_{k, 21}&{\mathcal N}_{k, 22}\srix.
\end{align}
We summarize \eqref{cceq_bt} and \eqref{cceq_spa} to one equation. That is \begin{align*}
 -\tilde C_{x_0}^{\top} \smat{\mathcal C}_1 &{\mathcal C}_2\srix = \smat {\tilde A}_{x_0}^{\top} {\Theta}_{1}& {\mathbf A}_{21}^\top {\Theta}_{2} \srix+ \smat {\Theta_1} & {\bar{\mathbf A}}_{21}^\top \Theta_2\srix \smat
 {\mathcal A}_{11}&{\mathcal A}_{12}\\ 
{\mathcal A}_{21}&{\mathcal A}_{22}
\srix +\frac{1}{\gamma^2}\sum_{k=1}^{m}  \smat {\tilde N}_{x_0, k}^\top {\Theta}_{1} &{\mathbf N}_{k, 21}^\top {\Theta}_{2}\srix \smat {\mathcal N}_{k, 11}&{\mathcal N}_{k, 12}\\ 
{\mathcal N}_{k, 21}&{\mathcal N}_{k, 22}\srix.
\end{align*}
where ${\mathbf A}_{21}\in\{\mathcal A_{21}, 0\}$, $\bar {\mathbf A}_{21}\in \{0, -  {\mathcal A}_{22}^{-1}{\mathcal A}_{21}\}$ and ${\mathbf N}_{k, 21}\in \{\mathcal N_{k, 21}, {\bar {\mathcal N}}_{k, 21}\}$. Inserting this into $\trace(\smat{\mathcal C}_1 &{\mathcal C}_2\srix Y \tilde C_{x_0}^{\top})$ yields \begin{align*}
&-\trace(\smat{\mathcal C}_1 &{\mathcal C}_2\srix Y \tilde C_{x_0}^{\top})\\
&=\trace\left(Y\left[\smat {\tilde A}_{x_0}^{\top} {\Theta}_{1}& {\mathbf A}_{21}^\top {\Theta}_{2} \srix+ \smat {\Theta_1} & {\bar{\mathbf A}}_{21}^\top \Theta_2\srix \smat
 {\mathcal A}_{11}&{\mathcal A}_{12}\\ 
{\mathcal A}_{21}&{\mathcal A}_{22}
\srix +\frac{1}{\gamma^2}\sum_{k=1}^{m}  \smat {\tilde N}_{x_0, k}^\top {\Theta}_{1} &{\mathbf N}_{k, 21}^\top {\Theta}_{2}\srix \smat {\mathcal N}_{k, 11}&{\mathcal N}_{k, 12}\\ 
{\mathcal N}_{k, 21}&{\mathcal N}_{k, 22}\srix\right]\right)\\
&=   \trace\left(\Theta_1\left[ \smat{\mathcal A}_{11}&{\mathcal A}_{12}\srix Y +Y_1 {\tilde A}_{x_0}^\top + \frac{1}{\gamma^2}\sum_{k=1}^{m}    
 \smat {\mathcal N}_{k, 11}&{\mathcal N}_{k, 12}\srix Y {\tilde N}_{x_0, k}^\top \right]\right) \\
&\quad + \trace\left(\Theta_2\left[ Y_2{\mathbf A}_{21}^\top + \smat{\mathcal A}_{21}&{\mathcal A}_{22}\srix Y {\bar{\mathbf A}}_{21}^\top+ \frac{1}{\gamma^2}\sum_{k=1}^{m} \smat
{\mathcal N}_{k, 21}&{\mathcal N}_{k, 22}\srix Y{\mathbf N}_{k, 21}^\top\right]\right).
                      \end{align*}
The first $r_{x_0}$ rows of \eqref{yequation} give us \begin{align*}
 -\trace(\smat{\mathcal C}_1 &{\mathcal C}_2\srix Y \mathcal  C_{1}^\top) = &-\trace\left(X_{0, 1}^\top\Theta_1 X_{0, 1}\right) \\
 &+ \trace\left(\Theta_2\left[ Y_2{\mathbf A}_{21}^\top + \smat{\mathcal A}_{21}&{\mathcal A}_{22}\srix Y {\bar{\mathbf A}}_{21}^\top+ \frac{1}{\gamma^2}\sum_{k=1}^{m} \smat
{\mathcal N}_{k, 21}&{\mathcal N}_{k, 22}\srix Y{\mathbf N}_{k, 21}^\top\right]\right).
\end{align*}
Inserting this into \eqref{firstEB_reform} leads to \begin{align*}
 \mathcal E = &\trace(X_{0, 1}^\top(\tilde Q-\Theta_1)X_{0, 1}) \\
 &+ \trace\left(\Theta_2\left[  X_{0, 2} X_{0, 2}^\top + 2 Y_2{\mathbf A}_{21}^\top + 2\smat{\mathcal A}_{21}&{\mathcal A}_{22}\srix Y {\bar{\mathbf A}}_{21}^\top+ \frac{2}{\gamma^2}\sum_{k=1}^{m} \smat
{\mathcal N}_{k, 21}&{\mathcal N}_{k, 22}\srix Y{\mathbf N}_{k, 21}^\top\right]\right),
                                                          \end{align*}
By the left upper $r_{x_0}\times r_{x_0}$ block of \eqref{bal_equation_x0} (BT) and \eqref{eq_red_spa} (SPA), it holds that \begin{align*}                                                                                                                
{\tilde A}_{x_0}^{\top} \Theta_1 + \Theta_1 {\tilde A}_{x_0}+ \frac{1}{\gamma^2}\sum_{k=1}^{m} {\tilde N}_{x_0, k}^{\top} \Theta_1 {\tilde N}_{x_0, k}  =- {\tilde C}_{x_0}^{\top} {\tilde C}_{x_0}  -\frac{1}{\gamma^2}\sum_{k=1}^{m} \mathbf N_{k, 21}^\top \Theta_2 \mathbf N_{k, 21}                                                                                                                         \end{align*}
for both reduced order schemes. Subtracting this identity from the equation for the reduced Gramian $\tilde Q$, we find 
\begin{align*}
 {\tilde A}_{x_0}^{\top} (\tilde Q-\Theta_1) + (\tilde Q-\Theta_1) {\tilde A}_{x_0}+ \frac{1}{\gamma^2}\sum_{k=1}^{m} {\tilde N}_{x_0, k}^{\top} (\tilde Q-\Theta_1) {\tilde N}_{x_0, k}  = \frac{1}{\gamma^2}\sum_{k=1}^{m} \mathbf N_{k, 21}^\top \Theta_2 \mathbf N_{k, 21}.
\end{align*}
Hence, exploiting the equation for $\tilde P$, we have \begin{align*}
&\trace(X_{0, 1}^\top(\tilde Q-\Theta_1)X_{0, 1}) = - \trace\left(\left[{\tilde A}_{x_0}\tilde P+ \tilde P {\tilde A}_{x_0}^{\top}+\frac{1}{\gamma^2}\sum_{k=1}^{m} {\tilde N}_{x_0, k} \tilde P {\tilde N}_{x_0, k}^{\top} \right] (\tilde Q-\Theta_1)\right)\\
&= - \trace\left(\tilde P\left[{\tilde A}_{x_0}^\top (\tilde Q-\Theta_1)\ + (\tilde Q-\Theta_1)\ {\tilde A}_{x_0}+ \frac{1}{\gamma^2}\sum_{k=1}^{m} {\tilde N}_{x_0, k}^\top (\tilde Q-\Theta_1)\ {\tilde N}_{x_0, k} \right]\right)\\
&= - \trace\left( \Theta_2 \frac{1}{\gamma^2}\sum_{k=1}^{m}  \mathbf N_{k, 21}\tilde P \mathbf N_{k, 21}^\top\right),
               \end{align*}
which concludes the proof of this theorem.
\end{proof}

\bibliographystyle{apacite}
%\bibliography{refs}

\end{document}